\newtheorem{thm}{Theorem}[section]
\newtheorem{defi}{Definition}[section]
\newtheorem{lem}{Lemma}[section]
\newtheorem{rem}{Remark}[section]
\newcommand{\commentbis}[1]{}
\newcommand{\be}{\begin{eqnarray}}
\newcommand{\ee}{\end{eqnarray}}
\newcommand{\beno}{\begin{eqnarray*}}
\newcommand{\eeno}{\end{eqnarray*}}
\newcommand{\barr}[1]{\begin{array}{#1}}
\newcommand{\earr}{\end{array}}
\newcommand{\conv}{\rightarrow}
\newcommand{\ma}{\alpha}
\newcommand{\mt}{\theta}
\newcommand{\mO}{\Omega}
\newcommand{\cM}{\mathcal M}
\newcommand{\cB}{\mathcal B}
\newcommand{\R}{\mathbb R}
\newcommand{\dt}{{\Delta t}}
\newcommand{\ud}{\frac{1}{2}}  
\newcommand{\ipud}{{i+\ud}}
\newcommand{\imud}{{i-\ud}}
\newcommand{\pro}{\mathbb{P}_h}
\newcommand{\jpud}{{j+\ud}}
\newcommand{\jmud}{{j-\ud}}
\newcommand{\xia}{x^i_\ma}
\newcommand{\wia}{w^i_\ma}
\newcommand{\xja}{x^j_\ma}
\newcommand{\wja}{w^j_\ma}
\newcommand{\qkell}{{q_{k,\ell}}}
\title[Convergence of DG schemes for obstacle problems]{Convergence of discontinuous Galerkin schemes for front propagation with obstacles}
\keywords{Hamilton-Jacobi-Bellman equations; discontinuous Galerkin methods; level sets; 
front propagation; obstacle problems; dynamic programming principle; convergence}
\author{Olivier Bokanowski}
\address{Laboratoire Jacques-Louis Lions, Universit\'e Pierre et Marie Curie
75252 Paris Cedex 05 France, and
Universit\'e Paris-Diderot (Paris 7),
5 Rue Thomas Mann 75205, Paris CEDEX 13, France.
}
\email{boka@math.jussieu.fr}
\thanks{Research of the first author is supported by the EU under 
the 7th Framework Programme Marie Curie Initial Training Network
``FP7-PEOPLE-2010-ITN'', SADCO project, GA number 264735-SADCO.}
\author{Yingda Cheng}
\address{Department of Mathematics, Michigan State University,
East Lansing, MI, 48824 USA}
\email{ycheng@math.msu.edu}
\thanks{Research of the second author is
supported by NSF grant DMS-1217563 and the start-up grant from 
Michigan State University.}
\author{Chi-Wang Shu}
\address{Division of Applied Mathematics, Brown University,
Providence, RI 02912, USA}
\email{shu@dam.brown.edu}
\thanks{Research of the third author is
supported by ARO grant W911NF-11-1-0091 and NSF grants
DMS-1112700 and DMS-1418750.}
\date{February 21, 2015}
\begin{document}

\maketitle

\begin{abstract}
We study semi-Lagrangian discontinuous Galerkin (SLDG) and 
Runge-Kutta discontinuous Galerkin (RKDG) schemes for some 
front propagation problems in the presence of an obstacle 
term, modeled by a nonlinear
Hamilton-Jacobi equation of the form $\min(u_t + c u_x, u - g(x))=0$, 
in one space dimension.
New convergence results and error bounds are obtained for 
Lipschitz regular data. 
These ``low regularity" assumptions are the natural ones
for the solutions of the studied equations. 
Numerical tests are given to illustrate the behavior of our schemes.
\end{abstract}

\section{Introduction}

In this paper, we establish convergence of a class of 
discontinuous Galerkin (DG) methods for the one-dimensional 
Hamilton-Jacobi (HJ) equation below, hereafter also called the 
``obstacle" equation,
\begin{subequations}
\label{eq:02}
\be 
  & &  \min( u_t + c \,u_x,\ u-g(x)) = 0, \quad x\in I= (0,1),
\quad t>0, \label{eq:02a} \\
  & & u(0,x) = u_0(x), \quad x\in (0,1), \label{eq:02b}
\ee
\end{subequations}
with periodic boundary conditions on $I$ and a constant 
$c\in \R$.  In \eqref{eq:02}, the function $g$ is called the 
``obstacle" function.

It is well known that taking constraints in optimal control 
problems is not an obvious task. Within the viscosity theory, 
it is possible to devise schemes for obstacle equations such 
as \eqref{eq:02}. However a monotonicity condition is needed 
in general for proving the convergence of the scheme. The 
monotonicity condition can yield a convergence proof of
one-half order in the mesh size \cite{Crandall_Lions_84}
(see also \cite{Bokanowski_Forcadel_Zidani_2010} for more 
specific partial differential equations (PDEs) with an 
obstacle term and related error estimates). 
However, a serious restriction of such monotonicity condition
is that the schemes become at most first order accurate for
smooth solutions or in smooth regions, thus making the schemes
highly inefficient for practical computation.
On the other hand, it is very difficult to show convergence
of formally higher order schemes, such as the
ones studied in this paper, when the solution is not regular enough.

In our previous work \cite{bok-che-shu-14}, we proposed a class of 
Runge-Kutta DG (RKDG) methods adapted to front propagation
problems with obstacles. 
The DG methods under consideration were originally devised
to solve conservation laws, see for example the review paper
\cite{Cockburn_2001_RK_DG}.
As for the DG-HJ solvers,
in \cite{Hu_1999_SIAM_DG_FEM, Li_2005_AML_DG_HamiJaco}, the first 
efforts relied on solving the conservation law system satisfied by 
the derivative of the solution.  
See also \cite{chen2007adaptive} for an adaptive version of this scheme.
In \cite{Cheng_07_JCP_HJ}, a DG 
method for directly solving the Hamilton-Jacobi equation was 
developed and was later generalized to solve front propagation 
problems \cite{Bokanowski_Cheng_Shu_11}
and obstacle problems~\cite{bok-che-shu-14}.
Other direct DG solvers include the central DG scheme 
\cite{Li_Yakovlev_10} and the local DG scheme \cite{Yan_Osher_11}.
The schemes proposed in \cite{bok-che-shu-14} feature a simple treatment 
of the obstacle functions. Stability analysis is performed with
forward Euler, a Heun scheme and a TVD Runge-Kutta third order 
(TVD-RK3) time discretization using the techniques developed in 
Zhang and Shu~\cite{Zhang_Shu_RK3_2010}.

On the other hand, the semi-Lagrangian DG (SLDG) methods were 
proposed in 
\cite{restelli2006semi, rossmanith2011positivity, qiu2011positivity} 
to compute incompressible flow and Vlasov equations, as well as in 
\cite{Bokanowski_Simarmata} for some general linear first and 
second order PDEs.
The advantage of SLDG is its ability to take large time steps 
without a CFL restriction. 
However, it is difficult to design SLDG methods for nonlinear problems
(some SL schemes for Hamilton-Jacobi-Bellman equations were proposed 
in \cite{bokanowski2012adaptive}, but without convergence proof).
For general SL methods, we refer to 
the works of Falcone and Ferretti~\cite{falcone1994discrete, 
falcone1998convergence, falcone2002semi},
see also the textbook~\cite{falconebook}.
There is also a vast literature on high order finite difference schemes
for solving HJ equations, see, e.g. 
\cite{Osher_1991_SIAM_NonOscill, Abgrall_96_CPAM_HJ, Jiang_1999_SIAM_WENO,
Kurganov-Tadmor-HJ}. 

Beyond the scope of the present paper, yet of great interest, 
we also mention the second order PDE with obstacle terms, such 
as $\min(u_t - c u_{xx}, u-g(x))=0$, with $c>0$.
This is the case of the so-called ``American options" in 
mathematical finance \cite{Achdou_Pironneau}. 
Explicit schemes were proposed and proved to converge within 
the viscosity theory (see for instance \cite{Jakobsen_2003}) yet 
with a reduced rate of convergence. Variational methods for nonlinear
obstacle equations can also be devised \cite{Hintermuller_et_al} but 
will lead in general to nonlinear implicit schemes
which are computationally more demanding.

The scope of the present paper is to study convergence of the SLDG 
and RKDG schemes for the obstacle problem \eqref{eq:02}. The main 
challenges include the low regularity of the solution and the 
nonlinear treatment needed to obtain the obstacle solution.  Due 
to the fully discrete nature of the method, traditional 
techniques for obtaining semi-discrete error estimates of DG methods 
for hyperbolic problems cannot directly apply here.
Therefore, fully discrete analysis is necessary. Fully discrete 
analysis of RKDG methods for conservation laws has been performed 
in the literature.
In \cite{Zhang_Shu_RK3_2010}, error estimates for RKDG methods 
for scalar conservation laws were provided for smooth solutions.
In \cite{cockburn2008error, zhang2013error}, discontinuous 
solutions with RK2 and linear polynomials and RK3 time 
discretizations with general polynomials were studied for
linear conservation laws.
However, to our best knowledge, convergence results for second
and higher order DG schemes solving nonlinear hyperbolic equations
with irregular solutions are not available.

As a consequence of our results,
assuming that $h$ is a space step and $\dt$ a time step,
we shall show  error bounds of the order of $O(h^{9/10})$ for 
the SLDG schemes (under time stepping $\dt \equiv C h^{3/5}$,
larger time step can be taken with a lower convergence rate), 
and of order $O(h^{1/2})$ for the RKDG schemes (under time 
stepping $\dt \equiv C h$). We will need a natural ``no shattering" 
regularity assumption 
on the exact solution that will be made precise in the sequel, 
otherwise we will typically 
assume the exact solution to be Lipschitz regular and 
piecewise $C^q$ regular for some $q\geq 1$ for SLDG (resp. 
$q\geq 2$ for RKDG).

The main idea of our proof is based on the dynamic programming 
principles as illustrated below.
The viscosity solution of \eqref{eq:02} also corresponds to the 
following optimal control problem:
\begin{eqnarray}
  \label{eq:0v}
  u(t,x)= \max\bigg(u_0(x- ct),\ \max_{\mt\in[0,t]} g(x-c\mt)\bigg).
\end{eqnarray}
The function $u$ is also the solution of the Bellman's dynamic 
programming principle (DPP): for any $\dt>0$,
\begin{eqnarray}\label{eq:0dpp}
  u(t+\dt,x) = \max \bigg( u(t,x-c\dt) , 
\max_{\mt\in[0,\dt]} g(x-c\mt)\bigg), \quad \forall \,t\geq 0.
\end{eqnarray}
Notice conversely that the DPP \eqref{eq:0dpp}, together with 
$u(0,x)=u_0(x)$, implies \eqref{eq:0v}.

If we denote
$$ u^n(x) := u(t_n,x) $$
and
$$ g_t(x):=\max_{\mt\in[0,t]} g(x-c\mt),$$
then the DPP implies in particular for any $x$ and $n\geq 0$:
\be\label{eq:vn1exact}
  u^{n+1}(x)= \max ( u^n(x-c\dt) , g_{\dt}(x)).
\ee

Using formula \eqref{eq:0v}, we can see that when $u_0$ and $g$ 
are Lipschitz regular, then $u$ is also Lipschitz regular in space 
and time, and in general
no more regularity can be assumed (the maximum of two regular 
functions is in general no more than Lipschitz regular).
When $u_0$ is a discontinuous function (otherwise piecewise 
regular), formula \eqref{eq:0v} implies also some regularity
on the solution $u$.

The rest of the paper is organized as follows.  In Section 
\ref{sec:dg}, we describe the DG methods for the obstacle 
equations.  In Section \ref{sec:prelim}, we collect some 
lemmas that will be used in our convergence proofs.  Section 
\ref{sec:sldg} and Section \ref{sec:rkdg} are devoted to the 
convergence analysis of SLDG and RKDG methods, respectively. In both 
sections, we will first establish error estimates for SLDG 
and RKDG schemes for linear transport equations without obstacles. 
Then we will use DPP illustrated above to prove convergence of 
the numerical solution in the presence of obstacles.
Numerical examples are given in Section~\ref{sec:num}.
We conclude with a few remarks for the multi-dimensional case in 
Section~\ref{sec:conclusion}.


\section{DG schemes for the obstacle equation}
\label{sec:dg}

In this section, we will introduce the SLDG and RKDG methods for 
the obstacle equation \eqref{eq:02}. For simplicity of discussion, in the 
rest of the paper, we will assume $c$ to be a positive constant.

Let $I_j:=(x_\jmud,x_\jpud), \,j=1, \ldots, N$ be a set of 
intervals forming a partition of $I=(0,1)$.
We denote $h:=\max_j h_j$ where $h_j=|I_j|$ is the length of 
the interval $I_j$.  For a given integer $k\geq 0$,
let $V_h$ be the DG space of piecewise polynomial of degree at 
most $k$ on each interval $I_j$:
\be
  V_h:= \{ v :I\conv\R,\ \ v_{|I_j} \in P^k,\ \forall j\}.
\ee

To introduce the methods for the obstacle problems, we follow 
two steps. Firstly, we describe the DG solvers for the linear 
advection equation $v_t + c v_x=0$. 

To advance the numerical solution in one step from 
$v_h^n \in V_h$ to $v_h^{n+1} \in V_h$, we consider one of the 
two DG methods described below.

\bigskip

{\underline{SLDG Scheme}:}
\be 
\label{eq:SLDG}
  v_h^{n+1} : = \Pi_h \big( v_h^n (\cdot - c\dt) \big),
\ee
where $\Pi_h$ is the $L^2$ projection onto the space $V_h$.  
We shall denote this SLDG solver by $v_h^{n+1}=G_{\dt}^{SL} (v_h^n)$.

\bigskip

{\underline{RKDG Scheme} (by TVD-RK3 time stepping):}
find $v_h^{n,1}$, $v_h^{n,2}$, $v_h^{n+1} \in V_h$, such that 
\begin{subequations}
\label{eq:RK3}
\be 
  & & (v_h^{n,1} - v_h^n, \varphi_h) = \dt \mathcal{H}(v_h^n, \varphi_h), 
 \quad \forall \varphi_h \in V_h \\
  & & (v_h^{n,2} - \frac{3}{4} v_h^n - \frac{1}{4} v_h^{n,1}, \varphi_h)  
=\frac{\dt}{4} \mathcal{H}(v_h^{n,1}, \varphi_h), 
 \quad \forall \varphi_h \in V_h \\
 & & (v_h^{n+1} - \frac{1}{3} v_h^n - \frac{2}{3} v_h^{n,2}, \varphi_h) 
= \frac{2 \dt}{3} \mathcal{H}(v_h^{n,2}, \varphi_h), 
   \quad \forall \varphi_h \in V_h
\ee
\end{subequations}
where
$$
(\phi, \varphi)=\int_I \phi \,\varphi \,dx
$$
and
$$
{\mathcal H}_j(\phi_h, \varphi_h)= \int_{I_j} c \phi_h \,(\varphi_h)_x dx 
- c ( (\phi_h)^{-}_\jpud (\varphi_h)^{-}_\jpud -(\phi_h)^{-}_\jmud 
(\varphi_h)^{+}_\jmud),  
$$
$$
\mathcal H(\phi_h, \varphi_h) =\sum_j \mathcal H_j(\phi_h, \varphi_h).
$$
We shall denote this RKDG solver  by $v_h^{n+1}=G_{\dt}^{RK} (v_h^n)$.

\medskip

After introducing DG schemes for the linear transport problem, for 
the obstacle equation \eqref{eq:02}, we shall consider two approaches: 
one by using the $L^2$ projection
\be \label{eq:DGscheme}
  u_h^{n+1} := \Pi_h \bigg(\max(G_\dt (u_h^{n}),\ \tilde g)\bigg),
\ee
where $G_\dt=G_{\dt}^{SL}$ or $G_{\dt}^{RK}$ and 
\be\label{eq:g1}
  \tilde g \equiv g_\dt  \quad \textrm{or} \quad \tilde g\equiv  
\max(g(x), g(x-c \dt)).
\ee
The idea of the formulation above is to try to follow the relation 
\eqref{eq:vn1exact}, and the projection step is 
to project the function into the piecewise polynomial space  $V_h$. 

Unfortunately, the scheme \eqref{eq:DGscheme} is difficult to 
implement, because we need to compute the maximum of two functions, 
which requires locating the roots of $G_\dt (u_h^{n})-\ \tilde g$.
Another more practical approach is to define $u^{n+1}_h$ as the 
unique polynomial in $V_h$ such that 
\be \label{eq:DGscheme2}
  \qquad  u_h^{n+1}(x^j_\ma) :=  \max\big(G_\dt (u_h^{n})(x^j_\ma),\ 
\tilde g(x^j_\ma)\big), \quad \forall \,j=1, \ldots, N,\, \ma=0, 
\ldots, k,
\ee
where $(x^j_\ma)_{\ma=0,\dots,k}$ are the $k+1$ Gauss-Legendre 
quadrature points on the interval $I_j$, and $w^j_\ma$ are the 
corresponding quadrature weights. Those schemes were studied in 
details and stability was established in \cite{bok-che-shu-14}. 
We shall see in later sections that definitions \eqref{eq:DGscheme} 
or \eqref{eq:DGscheme2} lead to similar
error estimates, although the second approach is much easier to implement.

Finally, we remark that in \cite{bok-che-shu-14}, forward Euler and 
TVD-RK2 temporal discretizations are also considered.  However,  
the stability restriction for the time step for the forward Euler 
method is rather severe as $\dt \leq C h^2$, and the stability 
proof of TVD-RK2 only works for piecewise linear polynomials. 
Therefore, in this paper, we will only consider TVD-RK3 time 
discretizations.


\section{Preliminaries}
\label{sec:prelim}

In this section, we collect some lemmas which will be used in 
our convergence proof, and discuss properties of the obstacle 
solutions. Here and below, we use $C$ (possibly with subscripts) 
to denote a positive constant depending solely on the exact 
solution, which may have a different value in each occurrence.

Let us introduce, for $\ell\geq 0$, the following function sets:
\beno
 & &  \mathcal C^{\ell+1}_{p,L,c_0}(0,1):= \bigg\{ v:(0,1)\conv\R,\ 
\mbox{$v$ Lipschitz continuous with $\|v'\|_{L^\infty}\leq L$,}\\
 & &  \hspace*{3cm} \mbox{$v$ piecewise $C^{\ell+1}$ with 
$\|v^{(\ell+1)}\|_{L^\infty} \leq c_0$,}\\
 & &  \hspace*{3cm} \mbox{and $v$ admits at most $p\geq 0$ non 
regular points.}  \bigg\}
\eeno
where $v^{(\ell+1)}$ denotes the $(\ell+1)$-th derivative almost 
everywhere, and
\beno
 & &  \Delta^{2}_{q}(0,1):= \bigg\{ g:(0,1)\conv\R,\ \mbox{$g$ has 
at most $q$ local maxima points} \\
 & &  \hspace*{3cm} \mbox{and $g$ is twice differentiable at each 
local maxima.} \bigg\}
\eeno
The following $\ell^2$ pseudo-norm definition will also be used:
\be
   \| f \|_{\ell^2}:=\bigg( \sum_{i,\ma} w^i_\ma | f(x^i_\ma) |^2  
h_i\bigg )^{1/2}.
\ee
In particular, using the Gauss-Legendre quadrature rule, for any 
$f\in V_h$ we have  $\| f\|_{\ell^2} = \| f\|_{L^2}$. From this 
point on, we will use $\|f\|$ to denote $ \| f\|_{L^2}$, and 
$\|f\|_{D}$ to denote $\|f\|_{L^2(D)}$ for a given domain D. 

\subsection{Properties of projections and the obstacle function}
For the RKDG method, it is necessary to consider the following 
Legendre-Gauss-Radau projection $\pro$. For any function $\varphi$, $\pro 
\varphi \in V_h$, and for any element $I_j$, it holds that 
$$ 
(\pro \varphi)_{j+1/2}^-=\varphi_{j+1/2}^-, \qquad \int_{I_j} 
(\pro \varphi -\varphi) \, \psi_h dx=0, \quad \forall\, \psi_h 
\in P^{k-1}(I_j) .
$$

\medskip

In the lemma below, we will first establish the projection 
properties for functions in the space $C^{\ell+1}_{p,L,c_0}$.

\begin{lem}\label{lem:bound1}
Let $\ell\geq 0$ and
let $\varphi$ be in $C^{\ell+1}_{p,L,c_0}$: $\varphi$ is Lipschitz 
continuous, piecewise $C^{\ell+1}$,
with at most $p\geq 0$ non regular points.
Then there exists a constant $C\geq 0$, depending only on 
$\ell,p,L,c_0$ such that
$$
  \| \varphi - P_h \varphi \|
  \leq C h^{q_{k,\ell}},
$$
where the projection $P_h$ can be either $\pro$ or $\Pi_h$, 
and $q_{k,\ell}$ is defined as
\be\label{eq:qkell}
  q_{k,\ell}:= \min \left( \min(k,\ell)+1,\frac{3}{2} \right) \equiv
    \left\{ \barr{ll} 3/2 & \mbox{if $k,\ell\geq 1$} \\ 
                       1  & \mbox{if $k=0$ or $\ell=0$} \earr\right.
\ee 
\end{lem}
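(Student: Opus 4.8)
The plan is to split the bound into the contribution from the finitely many non-regular points and the contribution from the smooth pieces. First I would recall that both projections $\Pi_h$ and $\pro$ are $L^2$-bounded uniformly in $h$ (for $\Pi_h$ this is immediate with norm $1$; for $\pro$ it is a standard scaling argument since the Radau projection is a fixed linear map on the reference element), and that they reproduce polynomials of degree $\le k$. Combining these two facts with the Bramble--Hilbert lemma on a reference element and a scaling argument gives the classical local estimate
\[
  \| \varphi - P_h \varphi \|_{I_j} \le C\, h_j^{\,m+1}\, \| \varphi^{(m+1)} \|_{L^\infty(I_j)}
\]
on any element $I_j$ on which $\varphi$ is $C^{m+1}$, valid for $0 \le m \le k$. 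Applying this with $m = \min(k,\ell)$ on every element that does \emph{not} contain a non-regular point, and summing the squares over those (at most $N$) elements using $\sum_j h_j = 1$, yields a contribution of order $h^{\min(k,\ell)+1}$, which is $\ge h^{3/2}$ when $k,\ell \ge 1$ and equals $h$ when $k=0$ or $\ell=0$.

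Next I would handle the at most $p$ ``bad'' elements, i.e.\ the elements $I_j$ containing one of the non-regular points of $\varphi$. On such an element $\varphi$ is still Lipschitz with $\|\varphi'\|_{L^\infty} \le L$, so taking $m = 0$ in the local estimate gives $\| \varphi - P_h \varphi \|_{I_j} \le C L\, h_j$. Since there are at most $p$ such elements, their total contribution to $\| \varphi - P_h \varphi \|^2$ is bounded by $p\, (CL)^2 h^{2} \le p\,(CL)^2 h^{3}$ — in particular it is $O(h^{3/2})$, and in fact better. Adding the two contributions and taking square roots gives
\[
  \| \varphi - P_h \varphi \| \le C\, h^{\min(\min(k,\ell)+1,\,3/2)} = C\, h^{q_{k,\ell}},
\]
with $C$ depending only on $\ell, p, L, c_0$ (through $\|\varphi^{(\ell+1)}\|_{L^\infty}\le c_0$ on the good elements and $L$ on the bad ones), which is exactly the claimed bound; the explicit two-case form of $q_{k,\ell}$ follows by inspecting whether $\min(k,\ell)$ is $0$ or $\ge 1$.

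I expect the only genuinely delicate point to be the treatment of the bad elements: one must be careful that the local $O(h_j)$ estimate there really does only cost a factor $h_j$ (not $h_j^{1/2}$) and that a Lipschitz function genuinely lands in the scope of the $m=0$ Bramble--Hilbert / scaling estimate for both $\Pi_h$ and $\pro$; the $\pro$ case needs the observation that its defining conditions (interpolation at $x_{j+1/2}^-$ plus orthogonality to $P^{k-1}$) still define a bounded projection reproducing constants, so the $m=0$ estimate applies. Everything else — the smooth-element estimate, the summation of squares, and the bookkeeping of constants — is routine. A minor subtlety worth stating explicitly is that a non-regular point may sit at an element interface, in which case it affects the two adjacent elements; this only changes $p$ to $2p$ in the count and is harmless.
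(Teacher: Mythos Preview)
Your approach is essentially identical to the paper's: split into regular and irregular elements, use the standard local approximation bound on each, and sum. However, your local estimate is misstated. On an element $I_j$ of length $h_j$, the $L^2$ bound picks up an extra factor $h_j^{1/2}$ when the right-hand side is measured in $L^\infty$, so the correct form is
\[
  \|\varphi - P_h\varphi\|_{L^2(I_j)} \ \le\ C\, h_j^{\,m+3/2}\,\|\varphi^{(m+1)}\|_{L^\infty(I_j)}
  \quad\bigl(\text{equivalently }\le C h_j^{\,m+1}|\varphi|_{H^{m+1}(I_j)}\bigr),
\]
which is the version the paper uses. With your stated bound $\le C L\, h_j$ on a bad element, the squared contribution over $p$ elements is $p(CL)^2 h^{2}$, and the inequality ``$p(CL)^2 h^{2}\le p(CL)^2 h^{3}$'' you then invoke is simply false for small $h$; taken literally, your argument would only produce $O(h)$ from the bad elements, which is not enough when $k,\ell\ge 1$. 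Once the missing $h_j^{1/2}$ is restored, each bad element contributes $O(h_j^{3/2})$ in $L^2$, the sum of squares is $O(p\,h^{3})$, and hence the bad part is $O(\sqrt{p}\,h^{3/2})$, exactly as in the paper. With this correction everything else in your outline (boundedness of $\pro$, the regular-element summation using $\sum_j h_j=1$, the remark about interface points doubling the count) is fine and matches the paper's argument.
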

\begin{proof}
Using the property of the projections \cite{ciarlet1978finite}, 
on each regular cell $I_j$, we have
$$
   \|\varphi-P_h \varphi\|_{ I_j} \leq  C h^{\min(\ell, k)+1} 
\|\varphi\|_{H^{\ell+1}(I_j)}  \leq  C h^{\min(\ell, k)+1}  
$$
since $\varphi \in C^{l+1}$ on $I_j$.
Therefore,  
$$
  \| \varphi -P_h \varphi\|_{L^2(\cup I_j,\ \varphi_{|I_{j}} 
\mathrm{regular})}
  = \left(\int_{\cup I_j,\ \varphi_{|I_j} \mathrm{regular}} 
|\varphi - P_h \varphi|^2 dx\right)^{1/2}
  \leq C h^{\min(\ell, k)+1}.
$$

On the other hand, on the intervals $I_j$ where $\varphi$ is not 
regular, we have
$$
   \|\varphi-P_h \varphi\|_{ I_j} \leq  C h^{\min(0, k)+1} 
\|\varphi\|_{H^1(I_j)} \leq C h \|\varphi\|_{H^1(I_j)} \leq C h^{3/2}
$$
because $\varphi$ is Lipschitz continuous.
Hence summing up on the ``bad" intervals $I_j$ (with at most $p$ 
such intervals),
$$
   \|\varphi-P_h \varphi\|_{L^2(\cup_{bad} I)} \leq C p^{1/2}   h^{3/2}.
$$
Summing up the bounds with bad intervals and good ones, we prove the 
desired result.
\end{proof}

We now state a similar estimate for the $\ell^2$ norm:

\begin{lem}\label{lem:bound3}
Assume that $\varphi$ belongs to $C^{\ell+1}_{p,L,c_0}$, $\ell\geq 0$, 
then we have
\be\label{eq:bound3}
  \qquad
  \| \varphi - P_h \varphi \|_{\ell^2} = \left(\sum_{j,\ma} \wja 
\big|v(\xja) - (P_h \varphi)(\xja) \big|^2 h_j\right)^{1/2} 
  \leq C h^\qkell
\ee
where $P_h=\pro$ or $\Pi_h$,  $\qkell$ is defined as in 
\eqref{eq:qkell}, and the constant $C$ depends only on $p,L$ and $c_0$.
\end{lem}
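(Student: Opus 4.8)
The plan is to reduce the $\ell^2$ estimate to a family of cell-by-cell $L^\infty$ bounds for the projection error and then argue exactly as in Lemma~\ref{lem:bound1}, with the addition of an inverse inequality on $V_h$. The starting point is the elementary observation that, since the Gauss-Legendre weights $\wja$ are nonnegative and sum to one on each reference interval, for any function $f$ one has
\be
  \| f \|_{\ell^2}^2 \;=\; \sum_{j,\ma} \wja\, |f(\xja)|^2\, h_j \;\leq\; \sum_{j} h_j\, \| f \|_{L^\infty(I_j)}^2 .
\ee
Applying this with $f = \varphi - P_h\varphi$, it suffices to bound $\|\varphi - P_h \varphi\|_{L^\infty(I_j)}$ on each cell. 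Here I would use that both $\Pi_h$ and $\pro$ are cell-local (there is no inter-element constraint in $V_h$), that they reproduce every polynomial of degree $\leq k$, and that $(P_h\psi)|_{I_j}$ is controlled on $I_j$ by $\psi$ through a local stability estimate — a contraction for $\Pi_h$, while for $\pro$ one must control in addition the single endpoint value, which costs only a harmless factor $h^{-1/2}$ through the inverse inequality.

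On a cell $I_j$ where $\varphi$ is regular I would choose $\pi \in P^{\min(k,\ell)}(I_j)$ with $\|\varphi-\pi\|_{L^\infty(I_j)} \leq C h^{\min(k,\ell)+1}$ and $\|\varphi-\pi\|_{L^2(I_j)} \leq C h^{\min(k,\ell)+3/2}$ (standard polynomial approximation, \cite{ciarlet1978finite}, the constant depending on the exact solution as in Lemma~\ref{lem:bound1}), and write $\varphi - P_h\varphi = (\varphi-\pi) - P_h(\varphi-\pi)$ using $P_h\pi=\pi$; the inverse inequality $\|P_h(\varphi-\pi)\|_{L^\infty(I_j)} \leq C h_j^{-1/2}\|P_h(\varphi-\pi)\|_{L^2(I_j)} \leq C h_j^{-1/2}\|\varphi-\pi\|_{L^2(I_j)}$ then yields $\|\varphi-P_h\varphi\|_{L^\infty(I_j)} \leq C h^{\min(k,\ell)+1}$. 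On one of the at most $p$ cells where $\varphi$ is not regular I would instead take $\pi$ to be the constant equal to $\varphi$ at the midpoint of $I_j$; Lipschitz continuity gives $\|\varphi-\pi\|_{L^\infty(I_j)} \leq L h$ and $\|\varphi-\pi\|_{L^2(I_j)} \leq L h^{3/2}$, and the same reproduction-plus-inverse-inequality argument gives $\|\varphi-P_h\varphi\|_{L^\infty(I_j)} \leq C L h$. This bad-cell step is the one place where the Lipschitz hypothesis (rather than mere boundedness) is used; it mirrors precisely the $O(h^{3/2})$ estimate in the proof of Lemma~\ref{lem:bound1}, and balancing the loss in the inverse inequality against the gain from the Lipschitz bound is the main point to get right.

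Combining these cell bounds in the elementary inequality above, the regular cells contribute at most $C h^{2\min(k,\ell)+2}\sum_j h_j = C h^{2\min(k,\ell)+2}$, while the at most $p$ bad cells, each of length $\le h$, contribute at most $C L^2 h^2 \cdot p h = C p L^2 h^3$; taking square roots gives $\|\varphi - P_h\varphi\|_{\ell^2} \leq C\big(h^{\min(k,\ell)+1} + h^{3/2}\big) \leq C h^{\qkell}$ with $C$ depending only on $\ell,p,L,c_0$, as claimed. An essentially equivalent, perhaps cleaner, packaging is to split $\varphi - P_h\varphi = (\varphi - \Pi_h\varphi) + (\Pi_h\varphi - P_h\varphi)$: the second summand lies in $V_h$, so by exactness of the $(k+1)$-point Gauss-Legendre rule for polynomials of degree $\leq 2k+1$ its $\ell^2$ and $L^2$ norms coincide and it is controlled directly by Lemma~\ref{lem:bound1}, while the first summand is handled by the per-cell $L^\infty$ argument above applied to $\Pi_h$.
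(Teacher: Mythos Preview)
Your proof is correct and follows essentially the same route as the paper: bound $\|f\|_{\ell^2}^2 \leq \sum_j h_j\,\|f\|_{L^\infty(I_j)}^2$ via the positivity and unit sum of the Gauss weights, then establish cell-wise $L^\infty$ projection errors of order $h^{\min(k,\ell)+1}$ on regular cells and $h$ on the at most $p$ irregular cells, and sum. The only difference is packaging: where the paper quotes the bounds $\|\varphi-P_h\varphi\|_{L^\infty(I_j)}\leq C h^{s+1/2}\|\varphi\|_{H^{s+1}(I_j)}$ directly from \cite{ciarlet1978finite}, you rederive them via polynomial reproduction ($P_h\pi=\pi$) and the inverse inequality, and you additionally note the alternative split $(\varphi-\Pi_h\varphi)+(\Pi_h\varphi-P_h\varphi)$ using the Gauss exactness on $V_h$ --- this latter observation is a nice shortcut not in the paper.
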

\begin{proof}
On each regular cell $I_j$, we have \cite{ciarlet1978finite},
$$
   \|\varphi-P_h \varphi\|_{L^\infty(I_j)} \leq  C h^{\min(\ell,k)+1/2} 
\|\varphi\|_{H^{\ell+1}(I_j)}  \leq C h^{\min(\ell,k)+1} .
$$

Then using the fact that $\wja \geq 0$, $\sum_\ma \wja = 1$ 
we obtain that
$$  
\left(\sum_{I_j,\ \varphi_{|I_j} \mbox{ regular} }\sum_{\ma} \wja 
\big|\varphi^n(\xja) - (P_h \varphi^n)(\xja) \big|^2 h_j\right)^{1/2}  
\leq C h^{\min(\ell,k)+1}.
$$
On the other hand, when the interval $I_j$ is such that $\varphi$ 
contains a non-regular point, we can write
$$
   \|\varphi-P_h \varphi\|_{L^\infty(I_j)} \leq  C h^{\min(0,k)+1/2} 
\|\varphi\|_{H^1(I_j)} \leq C h.
$$
Therefore
\beno
  &&  \left(\sum_{I_j,\ \varphi_{|I_j} \mbox{ not regular} }\sum_{\ma} 
\wja \big|\varphi^n(\xja) - (P_h \varphi^n)(\xja) 
\big|^2 h_j\right)^{1/2}\\
    & \leq & \left(\sum_{I_j,\ \varphi_{|I_j} \mbox{ not regular} } 
h (C h)^2\right)^{1/2}  \\ & \leq & C p^{1/2}   h^{3/2}.
\eeno
This concludes our proof.
\end{proof}

\bigskip

We now turn to some estimates related to the obstacle function $g_\dt$.

\begin{lem}\label{lem:gapprox}
Assume that $g\in \Delta^2_q(0,1)$ for some integer $q\geq 1$.
Let $\tilde g(x):=\max(g(x),g(x-c\dt))$.
There exists a constant $C\geq 0$ such that 
\be\label{eq:g_estimate2a}
   \| \tilde g   - g_\dt \| \leq C \sqrt{q} \dt^{5/2}.
\ee
and
\be\label{eq:g_estimate2b}
   \| \tilde g   - g_\dt \|_{\ell^2} \leq C \sqrt{q}\ 
\sqrt{\dt+h}\,\dt^{2}.
\ee
\end{lem}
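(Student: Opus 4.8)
The plan is to analyze the pointwise difference $\tilde g(x) - g_\dt(x)$ near a fixed local maximum of $g$, and to show that this difference is both small in sup norm on a window of width $O(\dt)$ around each maximum and identically zero far from the maxima. First I would observe that $g_\dt(x) = \max_{\mt \in [0,\dt]} g(x - c\mt) \geq \max(g(x), g(x - c\dt)) = \tilde g(x)$ always, so the difference is nonnegative and we only need an upper bound. The key structural fact is that $g_\dt(x)$ differs from $\tilde g(x)$ only at points $x$ for which the maximizer $\mt^*(x) \in \arg\max_{\mt\in[0,\dt]} g(x-c\mt)$ is interior, i.e. $\mt^*(x) \in (0,\dt)$; at such a point $x - c\mt^*(x)$ is a local maximum $y_0$ of $g$ (here one uses $g \in \Delta^2_q$, so there are at most $q$ of them). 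Thus the support of $\tilde g - g_\dt$ is contained in the union of at most $q$ intervals of the form $[y_0, y_0 + c\dt]$, one per local maximum $y_0$ of $g$.

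Next I would estimate the sup norm of $\tilde g - g_\dt$ on one such window. Fix a local maximum $y_0$ of $g$; by hypothesis $g$ is twice differentiable at $y_0$, so $g(y_0) - g(y) \le \frac{C}{2}|y - y_0|^2$ for $y$ near $y_0$ (using $g'(y_0) = 0$ and the boundedness of the second derivative data implicit in $\Delta^2_q$, absorbing constants into $C$). On the window $x \in [y_0, y_0 + c\dt]$, writing $s := (x - y_0)/c \in [0,\dt]$, the interior maximizer gives $g_\dt(x) = g(y_0)$, while $\tilde g(x) = \max(g(x), g(x-c\dt)) = \max(g(y_0 + cs), g(y_0 + cs - c\dt))$. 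One of the two arguments $y_0 + cs$ or $y_0 + cs - c\dt$ lies within distance $\tfrac{c\dt}{2}$ of $y_0$; hence $\tilde g(x) \ge g(y_0) - \frac{C}{2}\big(\tfrac{c\dt}{2}\big)^2$, wait — more carefully, at the endpoints $s=0$ and $s=\dt$ the difference vanishes, and in between the better of the two arguments is at distance at most $c\,\min(s,\dt-s) \le c\dt/2$ from $y_0$, so $0 \le \tilde g(x) - g_\dt(x) = g(y_0) - \tilde g(x) \le \frac{C}{2}c^2\min(s,\dt-s)^2$. This pointwise bound, of size $O(\dt^2)$ overall but with the sharper profile $\min(s,\dt-s)^2$, is the crucial ingredient.

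Finally I would integrate. For \eqref{eq:g_estimate2a}: on one window, $\int_0^{c\dt}\big(\tilde g - g_\dt\big)^2\,dx \le C\,c\int_0^{\dt}\min(s,\dt-s)^4\,ds = O(\dt^5)$, and summing over the at most $q$ windows gives $\|\tilde g - g_\dt\|^2 \le C q\,\dt^5$, i.e. $\|\tilde g - g_\dt\| \le C\sqrt q\,\dt^{5/2}$. For the $\ell^2$ bound \eqref{eq:g_estimate2b}: here $\|f\|_{\ell^2}^2 = \sum_{j,\ma} w^j_\ma |f(x^j_\ma)|^2 h_j$, and each factor $h_j \le h$, while the weights sum appropriately; one needs to bound $\sum$ of $\min(s,\dt-s)^4$ sampled at quadrature nodes inside the windows. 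A window of width $c\dt$ meets $O(1 + \dt/h)$ cells, contributing $O\big((h+\dt)\cdot \dt^4\big)$ (using the pointwise $O(\dt^2)$ bound crudely on the $\min(s,\dt-s)^2$ profile, or more sharply the same integral estimate as above times the extra $h$ from the $h_j$ weight versus the Lebesgue measure); summing over $q$ windows yields $\|\tilde g - g_\dt\|_{\ell^2}^2 \le C q\,(\dt+h)\,\dt^4$, which is \eqref{eq:g_estimate2b}. The main obstacle is the bookkeeping in the $\ell^2$ case — correctly accounting for how many quadrature points fall in each $O(\dt)$-window and matching the $h_j$ weighting against the continuous estimate so that one gets exactly the factor $\sqrt{\dt+h}$ rather than a cruder $\sqrt{\dt}$ or $\sqrt{h}$; the continuous estimate \eqref{eq:g_estimate2a} is comparatively routine once the quadratic vanishing profile at the local maxima is in hand.
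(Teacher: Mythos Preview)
Your argument is correct and follows essentially the same route as the paper: identify that $\tilde g - g_\dt$ vanishes except on windows $[y_0, y_0 + c\dt]$ around each of the at most $q$ local maxima $y_0$, use twice differentiability at $y_0$ to get a pointwise $O(\dt^2)$ bound on the difference there, then integrate (resp.\ sum over the $O(\dt+h)$ worth of mesh cells covering each window) to obtain \eqref{eq:g_estimate2a} and \eqref{eq:g_estimate2b}. Your sharper pointwise profile $C\min(s,\dt-s)^2$ is correct but unnecessary---the paper simply uses the cruder $|g_\dt(x)-\tilde g(x)|\le |g(x^*)-g(x)|\le C\dt^2$ on the whole window, which already yields $\int(\cdot)^2\le C\dt^4\cdot q\,c\dt$ for the $L^2$ bound and $C\dt^4\cdot q(c\dt+2h)$ for the $\ell^2$ bound after covering by mesh intervals; this is exactly the ``crude'' bound you fall back to for \eqref{eq:g_estimate2b} anyway.
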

\begin{proof}
Denote $\cM_g$ the set of local maximum points of $g$, if 
$[x-c\dt,x]\cap \cM_g=\emptyset$, then
we see that $\tilde g(x)-g_\dt(x)=0$. Furthermore
\be
  \int_{ \{x,\ g_\dt(x)\neq \tilde g(x) \} } dx 
   & \leq & \int_{ \{x,\ [x-c\dt,x]\cap \cM_g\neq \emptyset\} } dx 
   \ \leq \ q \, c\dt
   \label{eq:oo2}
\ee
since there are at most $q$ local maxima.

Consider the case when $[x-c\dt,x]$ contains at least a local maxima  
of $g$. Assuming that $\dt$ is small enough we can assume that 
$x^*$ is the only local maximum of $g$ on the interval $[x-c\dt,x]$, 
so that $g_\dt(x)=g(x^*)$. Then, 
\be\label{eq:oo1}
  |g(x)-g_\dt(x)|=|g(x)-g(x^*)| \leq C |x-x^*|^2 \leq C \dt^2,
\ee
since $g$ is twice differentiable at $x^*$.

 Combining \eqref{eq:oo1} and \eqref{eq:oo2} we obtain the 
bound \eqref{eq:g_estimate2a}.

For the second estimate \eqref{eq:g_estimate2b}, we make use of a 
minimal covering  $\cup I$ of the set 
$$ 
  \bigg\{ x,\ [x-c\dt,x]\cap \cM_g\neq \emptyset\bigg\},
$$
using mesh intervals. The length of this covering is bounded by  
$c\dt+2h$ for each maximum point, since in order to cover
any interval $[a,b]$  we may need two more mesh intervals $I$, of 
length $\leq 2h$ than the minimum required length $b-a$. Overall 
the length of the total covering is bounded by $q(c\dt + 2h)$, hence we 
obtain the desired result.

\end{proof}

\subsection{Properties of the obstacle solutions} 
We shall impose some restrictions on the regularity of the obstacle 
solutions as described below.

\begin{defi}{\bf[``no shattering" property]}
\label{def:noshat}
For a given $T\geq 0$, we will say that the exact solution $u$ of the problem \eqref{eq:02} is 
``not shattering"
if there exists some $\ell\geq 0$ and constants 
$p,L,c_0$ such that the exact solution satisfies, for any $t\in[0,T]$, 
$u(t,.)\in C^{\ell+1}_{p,L,c_0}$.
\end{defi}

Recall that the exact solution satisfies 
$u(t,x)=\max(u_0(x-ct),g_t(x))= u_0(x-ct) + \max(0,g_t(x)-u_0(x-ct))$.
Therefore if $u$ is not shattering, it implies that  
$u_0(x-ct)-g_t(x)$ has bounded number of zeros
(since otherwise $x\conv u(t,x)$ would have an unbounded number of 
singularities).

A typical example where shattering occurs 
(therefore not satisfying definition \ref{def:noshat}), can be 
constructed as follows.
Suppose the domain $\Omega$ contains the interval $(-2,2)$,  let
$u_0(x)\equiv (x-1) +  (x-1)^3 \sin(1/(x-1))$ and $g(x)\equiv x$ 
on $[-2,2]$, together with velocity constant $c=1$.
The function $u_0$ is of class $C^2$ on the interval $(-2,2)$.
Notice then, $g_t(x)=\max_{\mt\in[0,t]} g(x-\mt) = g(x)$ for 
$t\in[0,1]$ and $x\in[-1,1]$, and
the exact solution is therefore $u(t,x)=\max(u_0(x-t),g(x))$ for 
$t\in [0,1]$ and $x\in[-1,1]$.
So at time $t=1$, 
$u(1,x)=\max(u_0(x-1),g(x)) \equiv x + \max(x^3 \sin(1/x),0)$ 
(for $x \in [-1,1]$).
This function has an infinite number of non regular points in the 
interval $[-1,1]$, and therefore does not satisfy
the ``no shattering" property at time $t=1$.

It is not easy to state precise conditions on the initial data 
$u_0$ and $g$ to ensure that the no shattering property 
will be satisfied. Mainly, $\forall t$, $x\conv g_t(x) - u_0(x-ct)$ 
should have only a finitely bounded number of zeros,
as is detailed below.
However, it is clear that shattering 
will not occur for generic data $u_0$ and $g$.
This definition still allows for a finite (bounded) number of 
singularities in $u(t_n,.)$,
as is generally the case when taking the maximum of two 
regular functions.
Finally we also give an example (see Lemma~\ref{lem:noshat2} below) 
where we can prove that the ``no shattering" condition is satisfied.

\begin{lem}\label{lem:noshat}
Assume that, for a given $T>0$,
\begin{subequations}
\be 
  & & g\in C^{\ell+1}_{p_g,L_g,c_g}(0,1),                \label{ass1} \\
  & & u_0\in C^{\ell+1}_{p_{u_0},L_{u_0},c_{u_0}}(0,1),  \label{ass2} 
\ee 
and that, $\forall \,t\in [0,T]$, 
\be  \label{ass3}
  & & x\conv g_t(x) - u_0(x-ct) \quad \mbox{has a finitely bounded number of zeros in $(0,1)$,} 
\ee
\end{subequations}
the bound being independent of $t$.

Then there exists constants $p,L,c_0$ such that, $\forall t\in[0,T]$, 
$u(t,x)$ belongs to $C^{\ell+1}_{p,L,c_0}$ 
(with $L=\max(L_g,L_{u_0})$, $c_0=\max(c_g,c_{u_0})$,
and $p$ that are independent of $t$.)
\end{lem}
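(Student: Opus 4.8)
The plan is to use the explicit representation $u(t,x)=\max\bigl(u_0(x-ct),g_t(x)\bigr)$ and to check the three defining requirements of the class $C^{\ell+1}_{p,L,c_0}$ separately: the uniform Lipschitz bound, the piecewise $C^{\ell+1}$ bound away from a controlled set of bad points, and the uniform bound on the number of non-regular points. The first requirement is the easiest: $x\mapsto u_0(x-ct)$ is $L_{u_0}$-Lipschitz for every $t$, and I would first observe that $g_t(x)=\max_{\mt\in[0,t]}g(x-c\mt)$ is $L_g$-Lipschitz uniformly in $t$, since it is a supremum of $L_g$-Lipschitz functions. Hence $u(t,\cdot)$, being the pointwise maximum of two functions that are Lipschitz with constant $\max(L_g,L_{u_0})=:L$, is itself $L$-Lipschitz, with $L$ independent of $t$.

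Next I would address the set of non-regular points of $u(t,\cdot)$. A point where $u(t,\cdot)$ fails to be $C^{\ell+1}$ must be one of three types: (i) a non-regular point of $x\mapsto u_0(x-ct)$, of which there are at most $p_{u_0}$; (ii) a non-regular point of $x\mapsto g_t(x)$; or (iii) a point where the two graphs cross, i.e. where $g_t(x)-u_0(x-ct)=0$ and the max switches branch — assumption \eqref{ass3} bounds the number of these by some constant independent of $t$. For type (ii) I would need an intermediate claim: $g_t$ itself lies in some $C^{\ell+1}_{p',L_g,c'}$ with $p'$ and $c'$ independent of $t$. This follows because, locally, $g_t(x)$ equals either $g(x-ct)$ (when the running maximum over $[0,t]$ is attained at the left endpoint) or $g(x^*)$ for a fixed interior local maximum point $x^*$ (a constant in $x$, hence trivially $C^\infty$) or $g$ evaluated somewhere — the transitions between these regimes happen at a bounded number of $x$-values governed by the local maxima of $g$, of which there are at most $p_g$ worth of candidates under assumption \eqref{ass1}; differentiating $g_t$ on each such piece gives bounds controlled by $c_g$. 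Collecting (i)–(iii) yields a total number of non-regular points $p$ bounded by a constant times $p_{u_0}+p_g+(\text{the bound from \eqref{ass3}})$, independent of $t$.

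Finally, for the piecewise $C^{\ell+1}$ bound with constant $c_0$: on any interval avoiding all the non-regular points identified above, $u(t,\cdot)$ coincides with a single smooth branch, namely either $u_0(\cdot-ct)$ (with $(\ell+1)$-th derivative bounded by $c_{u_0}$) or one of the smooth pieces of $g_t$ (with $(\ell+1)$-th derivative bounded by the constant $c'$ coming from $c_g$ above). So $\|u(t,\cdot)^{(\ell+1)}\|_{L^\infty}\leq \max(c_{u_0},c')=:c_0$, again uniformly in $t$. Assembling the three parts gives $u(t,\cdot)\in C^{\ell+1}_{p,L,c_0}$ for all $t\in[0,T]$ with the stated (or at least $t$-independent) constants.

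The main obstacle is the control of the regularity and of the switching points of $g_t$, i.e. the intermediate claim that $g_t\in C^{\ell+1}_{p',L_g,c'}$ uniformly in $t$. One must argue carefully that the running-maximum operation $g\mapsto g_t$ does not manufacture infinitely many kinks: each interior local maximum of $g$ generates a ``plateau'' region in $g_t$ whose boundary moves with $t$, and one needs that these regions, together with the already-present non-regular points of $g$, account for all non-smoothness of $g_t$, with a count independent of $t$ (for $t$ ranging over a fixed finite interval $[0,T]$). The remaining steps — the Lipschitz bound and the branchwise $C^{\ell+1}$ bound — are then routine bookkeeping.
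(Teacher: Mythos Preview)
Your proof plan is correct and follows essentially the same route as the paper: establish that $g_t\in C^{\ell+1}_{p',L_g,c_g}$ uniformly in $t$, then count the non-regular points of $u(t,\cdot)=\max(u_0(\cdot-ct),g_t)$ as (i) singularities of $u_0(\cdot-ct)$, (ii) singularities of $g_t$, and (iii) zeros of $g_t-u_0(\cdot-ct)$, and finally bound the $(\ell+1)$-th derivative branchwise. One refinement worth noting from the paper's version of the intermediate claim: in counting the kinks of $g_t$, local \emph{minima} of $g$ also contribute (each can produce a switch between the branches $g(x)$ and $g(x-ct)$), so the paper's bound is $p' = p_g + 2M + m$ with $M$ the number of local maxima and $m$ the number of local minima---your plateau argument from the maxima alone slightly undercounts.
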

\begin{proof}
First, for $g\in C^{\ell+1}_{p_g,L_g,c_g}(0,1)$, 
if we denote $M$  (resp. $m$) to be 
the number of local maxima (resp. minima) of $g$, then we have 
$g_t\in C^{\ell+1}_{p_g + 2M + m,L_g,c_g}(0,1)$. This is because 
the Lipschitz constant of $g_t$ is bounded by $L_g$ by an elementary
verification.
Each local maxima of $g$ may develop into two singularities in $g_t$,
each local minima may develop into one singularity in $g_t(\cdot)$, 
and each singular point of $g$ may continue to be a
singularity in $g_t(\cdot)$.
Hence the total number of non-regular points in $g_t(\cdot)$ will be 
bounded by $2M + m + p_g$. The bound of the $(\ell+1)$
derivative can also be obtained easily.

 Because the exact solution is given by $u(t,x)=\max(u_0(x-ct), 
g_t(x)) = u_0(x-ct) + \max(0,g_t(x) - u_0(x-ct))$,
the Lipschitz constant of $u(t,.)$ is therefore bounded by 
$\max(L_{u_0(\cdot-ct)},L_g)=\max(L_{u_0},L_g)$.

On the other hand, the number of singular points of $\max(0,g_t(x) 
- u_0(x-ct))$ is bounded by the sum of the number
of singular points of the function $x\conv g_t(x) - u_0(x-ct))$, 
plus the number of zeros 
of the same function, which is assumed to be bounded independently 
of $t\geq 0$.
Hence the the number of singular points of $x\conv u(t,x)$ is 
bounded independently of $t\geq 0$.

Finally the bound on the $x$-partial derivative 
$\|u^{(\ell+1)}(t,\cdot)\|_{L^\infty}$, in the regular region of $u$, 
is easily obtained, because $u$ is then locally one of the two 
functions $g_t$ or $u_0(\cdot-ct)$.
\end{proof}

\begin{lem}\label{lem:noshat2}
Assume that $g$ and $u_0$ satisfy the regularity assumptions~\eqref{ass1} and~\eqref{ass2} of Lemma~\ref{lem:noshat}.
Assume furthermore that there exists a constant $L_1\geq 0$ such that $|u'_0(x)| \geq L_1 > L_g$ for a.e. $x\in(0,1)$.
Then \eqref{ass3} holds and $u$ satisfies the ``no shattering" assumption.
\end{lem}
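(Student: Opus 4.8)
The plan is to verify hypothesis~\eqref{ass3} of Lemma~\ref{lem:noshat}; once this is done, the ``no shattering" property follows at once by applying Lemma~\ref{lem:noshat} itself (hypotheses \eqref{ass1} and \eqref{ass2} being assumed). So fix $t\in[0,T]$, set
$\psi_t(x):=g_t(x)-u_0(x-ct)$ on $(0,1)$, and the goal is to bound the number of zeros of $\psi_t$ by a constant independent of $t$.

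First I would recall that $g_t$ is $L_g$-Lipschitz: it is the pointwise supremum over $\mt\in[0,t]$ of the $L_g$-Lipschitz functions $x\mapsto g(x-c\mt)$, which is exactly the elementary verification already used in the proof of Lemma~\ref{lem:noshat}. Next, since $u_0\in C^{\ell+1}_{p_{u_0},L_{u_0},c_{u_0}}(0,1)$, the shifted function $x\mapsto u_0(x-ct)$ is piecewise $C^{\ell+1}$ on $(0,1)$ with at most $p_{u_0}$ non-regular points, so $(0,1)$ splits into at most $p_{u_0}+1$ open subintervals $J_1,\dots,J_r$ (with $r\le p_{u_0}+1$) on each of which $u_0(\cdot-ct)$ is $C^{\ell+1}$. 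On each $J_i$ the function $x\mapsto u_0'(x-ct)$ is continuous, and by hypothesis $|u_0'(x-ct)|\ge L_1>0$ a.e., hence everywhere on $J_i$ by continuity; a continuous non-vanishing function has constant sign, so $u_0(\cdot-ct)$ is strictly monotone on $J_i$ with $|u_0'(\cdot-ct)|\ge L_1$ there.

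Then, on a fixed $J_i$, the function $\psi_t=g_t-u_0(\cdot-ct)$ is Lipschitz (a difference of an $L_g$-Lipschitz function and a $C^1$ function), hence absolutely continuous, and a.e.\ on $J_i$ one has $\psi_t'=g_t'-u_0'(\cdot-ct)$ with $|g_t'|\le L_g$ while $u_0'(\cdot-ct)$ has constant sign and modulus $\ge L_1>L_g$. Therefore $\psi_t'$ has, a.e.\ on $J_i$, a fixed sign and $|\psi_t'|\ge L_1-L_g>0$; since $\psi_t(b)-\psi_t(a)=\int_a^b\psi_t'$ for absolutely continuous functions, $\psi_t$ is strictly monotone on $J_i$ and thus has at most one zero in $J_i$. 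Summing over $i$, $\psi_t$ has at most $r\le p_{u_0}+1$ zeros in $(0,1)$, a bound independent of $t\in[0,T]$. This establishes \eqref{ass3}, and Lemma~\ref{lem:noshat} then provides constants $p,L,c_0$, independent of $t$, with $u(t,\cdot)\in C^{\ell+1}_{p,L,c_0}(0,1)$ for all $t\in[0,T]$; that is, $u$ is not shattering.

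The only genuinely delicate point is the passage from ``$\psi_t'$ has a fixed sign a.e." to ``$\psi_t$ is strictly monotone", and this is safe precisely because $\psi_t$ is Lipschitz, hence absolutely continuous, so the fundamental theorem of calculus applies and the a.e.\ bound $|\psi_t'|\ge L_1-L_g>0$ transfers to a strict monotonicity statement. Everything else is bookkeeping of the number of regular pieces, which here is controlled by $p_{u_0}$ alone; note in particular that, for the zero count, no regularity of $g_t$ beyond its Lipschitz bound is needed, and the hypothesis $L_1>L_g$ is used only in this one inequality.
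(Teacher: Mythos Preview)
Your proposal is correct and follows exactly the approach the paper has in mind: the paper's own proof is a two-sentence sketch that observes $g_t$ is $L_g$-Lipschitz, that on each regular piece of $u_0$ the slope has modulus $\ge L_1>L_g$, and then defers to ``an elementary verification''; you have supplied precisely that verification by showing $\psi_t=g_t-u_0(\cdot-ct)$ is strictly monotone on each regular piece and hence has at most one zero there. One cosmetic point: your count of ``at most $p_{u_0}+1$'' covers zeros in the open subintervals $J_i$ but not possible zeros sitting exactly at the non-regular points; adding those (at most $p_{u_0}$ of them) still gives a bound independent of $t$, so the conclusion is unchanged.
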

\proof 
As we can see in Lemma~\ref{lem:noshat}, $g_t$ also has a Lipschitz 
constant $\leq L_g$. On the other hand on each regular part of $u_0$ 
there is a slope 
$\geq L_1$ which is strictly greater that $L_g$. By an elementary 
verification, one can show that assumption \eqref{ass3} is satisfied and 
that the result of Lemma~\ref{lem:noshat} can be applied.
\endproof

\section{Convergence of the SLDG schemes}
\label{sec:sldg}

In this section, we will provide the convergence proof of the 
SLDG scheme. In particular, we will proceed in three steps. First, 
we will establish error estimates of the SLDG methods for the 
linear transport equation
$$
v_t +c\, v_x=0, \qquad v(0,x)=v_0(x).
$$
We will then generalize the results to scheme \eqref{eq:DGscheme}, 
and finally to scheme \eqref{eq:DGscheme2} for the obstacle problem.

\subsection{Convergence of the SLDG scheme for the linear 
advection equation}

We first consider the linear equation  $v_t+ c v_x=0$, for which
$$ v(t+\dt)= v(t,x-c\dt). $$
We denote $v^n(\cdot)=v(t^n,\cdot)$, and we define the numerical 
solution of the SLDG method at $t^n$ to be $v_h^n$.
In particular, the scheme writes: initialize with 
$ v_h^0:=\Pi_h v_0$, and 
$v_h^{n+1}=G_{\dt}^{SL} (v_h^n)= \Pi_h(v_h^n (\cdot - c\dt))$ for 
$n\geq 0$.

\begin{thm}\label{th:1} We consider  $v_t + c v_x=0$, $v(0,x)=v_0(x)$.
If $v_0 \in  \mathcal C^{\ell+1}_{p,L,c_0}(0,1)$, then we have for 
all $n$ such that $n\dt \leq T$,
\beno
  \|v_h^{n} - v^{n}\|  \leq  CT\ \frac{h^\qkell}{\dt}
\eeno
for some constant $C\geq 0$ independent of $n$,
and $\qkell$ is defined in \eqref{eq:qkell}.
\end{thm}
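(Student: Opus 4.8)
The plan is to estimate the one-step error of the SLDG scheme and then propagate it over $n$ steps via a telescoping argument, exploiting the commutation properties of the exact transport operator with the $L^2$ projection. Write $S_{\dt}$ for the exact shift operator $(S_{\dt}v)(x) = v(x-c\dt)$, so that $v^{n+1} = S_{\dt} v^n$, while $v_h^{n+1} = \Pi_h S_{\dt} v_h^n$. The key elementary facts I would use are: (i) $\Pi_h$ is an $L^2$-orthogonal projection, hence $\|\Pi_h w\| \le \|w\|$ for all $w\in L^2$; (ii) the exact shift $S_{\dt}$ is an $L^2$-isometry on the periodic domain; and (iii) by Lemma~\ref{lem:bound1}, $\|w - \Pi_h w\| \le C h^{\qkell}$ whenever $w\in C^{\ell+1}_{p,L,c_0}$.

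First I would decompose the error at step $n+1$ as
\beno
  v_h^{n+1} - v^{n+1}
  = \big(\Pi_h S_{\dt} v_h^n - \Pi_h S_{\dt} v^n\big)
  + \big(\Pi_h S_{\dt} v^n - S_{\dt} v^n\big).
\eeno
The first bracket is $\Pi_h S_{\dt}(v_h^n - v^n)$, whose norm is bounded by $\|v_h^n - v^n\|$ using (i) and (ii); this is the "transport of the old error" term, and crucially it comes with factor $1$, not a factor $1+C\dt$. The second bracket is the new projection error committed at this step: since $v_0 \in C^{\ell+1}_{p,L,c_0}$ and the exact solution of linear transport is simply $v^n = S_{n\dt} v_0$, which lies in the same class $C^{\ell+1}_{p,L,c_0}$ with the same constants (shifting does not change Lipschitz constant, number of singular points, or the $(\ell+1)$-th derivative bound), Lemma~\ref{lem:bound1} gives $\|\Pi_h S_{\dt} v^n - S_{\dt} v^n\| = \|v^{n+1} - \Pi_h v^{n+1}\| \le C h^{\qkell}$. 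Hence
\beno
  \|v_h^{n+1} - v^{n+1}\| \le \|v_h^n - v^n\| + C h^{\qkell}.
\eeno
Iterating from $v_h^0 - v^0 = \Pi_h v_0 - v_0$, which is itself $O(h^{\qkell})$, yields $\|v_h^n - v^n\| \le (n+1) C h^{\qkell}$. Finally, using $n \le T/\dt$, this is bounded by $C T\, h^{\qkell}/\dt$, which is the claimed estimate (the constant $C$ absorbing the $+1$).

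The main subtlety — and the reason the factor is $h^{\qkell}/\dt$ rather than something better — is that this crude telescoping adds up $n \sim T/\dt$ independent projection errors without any cancellation; there is no smoothing or contraction available for the pure $L^2$ projection, so one genuinely loses the factor $1/\dt$. The only real point requiring care is the claim that $v^n$ stays in $C^{\ell+1}_{p,L,c_0}$ uniformly in $n$, i.e. that the exact transported solution does not degrade; on the periodic interval this is immediate because $S_{n\dt}$ is just a rigid rotation of the data, so all the defining quantities ($L$, $c_0$, $p$) are preserved exactly. I would state this observation explicitly before running the induction, and otherwise the argument is a short, routine computation.
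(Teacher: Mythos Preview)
Your proposal is correct and follows essentially the same approach as the paper: the same triangle-inequality decomposition $v_h^{n+1}-v^{n+1} = \Pi_h S_\dt(v_h^n-v^n) + (\Pi_h v^{n+1}-v^{n+1})$, the same use of $\|\Pi_h\|\le 1$, the isometry of $S_\dt$, Lemma~\ref{lem:bound1}, and the same $(n+1)$-step telescoping. The only difference is cosmetic --- your shift-operator notation $S_\dt$ is slightly cleaner than the paper's explicit $v^n(\cdot-c\dt)$ --- and your explicit remark that $S_{n\dt}$ preserves membership in $C^{\ell+1}_{p,L,c_0}$ is a point the paper states in one line without justification.
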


\begin{proof}
If $v_0 \in  \mathcal C^{\ell+1}_{p,L,c_0}(0,1)$, then $v^n \in  
\mathcal C^{\ell+1}_{p,L,c_0}(0,1)$, and due to Lemma 
\ref{lem:bound1}, we have:
$$ 
\|v^{n}(\cdot - c\dt) - \Pi_h(v^n(\cdot - c\dt)) \| \leq    C h^\qkell,
$$
for some constant $C$ independent of $n$.
Hence
\beno
 && \|v_h^{n+1} - v^{n+1}\|
    =   \|\Pi_h(v_h^n(\cdot - c\dt) - v^{n}(\cdot - c\dt)\| \\
   & &\leq  \|\Pi_h(v_h^n(\cdot - c\dt) - \Pi_h(v^n(\cdot - c\dt)\|
+ \|\Pi_h(v^n(\cdot - c\dt)- v^{n}(\cdot - c\dt)\|\\
   & &\leq  \|\Pi_h(v_h^n(\cdot - c\dt) - \Pi_h(v^n(\cdot - c\dt)\| 
+ C h^\qkell \\
   & &\leq  \| v_h^n(\cdot - c\dt) - v^n(\cdot - c\dt)\| + C h^\qkell \\
   & &\leq  \| v_h^n - v^n\| + C h^\qkell,
\eeno
where we have used the fact $\| \Pi_h u  \| \leq \| u\| $ in the 
fourth row, and the periodic boundary conditions in the last row.
As for the initial condition, we have
$$ 
\| v_h^0 - v_0 \| = \| \Pi_h v_0 - v_0 \| \leq Ch^\qkell .
$$
Finally we obtain for any given $T\geq 0$ the existence of a 
constant $C\geq 0$ (independent of $T$ and of $n$), such that
\beno
  \|v_h^{n} - v^{n}\|   \leq  C(n+1) \,h^\qkell
  \leq CT\ \frac{h^\qkell}{\dt},
\eeno
for all $n$ such that $n\dt \leq T$.
\end{proof}

\medskip

Therefore, if $\min(k,\ell)=0$, then $\|v_h^{n} - v^{n}\| \leq 
C T \frac{h}{\dt}$, and we need $h=o(\dt)$ for the convergence.
If otherwise $\min(k,\ell)\geq 1$, it holds  $\|v_h^{n} - v^{n}\|   
\leq C T \frac{h^{3/2}}{\dt}$ and we would only
need $h=o(\dt^{2/3})$ for the convergence.

\subsection{Convergence of the first SLDG scheme in the obstacle case}

Now we turn to scheme \eqref{eq:DGscheme} for the nonlinear equation 
\eqref{eq:02}. 
In particular, the scheme writes: initialize with 
$ u_h^0:=\Pi_h u_0$, and 
$u_h^{n+1} = \Pi_h (\max(G_{\dt}^{SL} (u_h^n) , \tilde g))$ for $n \geq 0$.

\begin{thm}\label{th:2}
Assume that the exact solution $u$ is not shattering in the sense 
of Definition~\ref{def:noshat}, for some integer $\ell\geq 0$.
Then the following error bound holds:
\beno
  \|u_h^{n} - u^{n}\|   \leq  CT\ \frac{h^\qkell}{\dt} + CT\,\frac{\|\tilde g - g_\dt\|}{\dt}
\eeno
for some constant $C\geq 0$ independent of $n$.
In particular,

$(i)$ If $\tilde g= g_\dt$, then $\forall \,t_n\leq T$:
\beno
  \|u_h^{n} - u^{n}\|   \leq  CT\ \frac{h^\qkell}{\dt}
\eeno

$(ii)$ If $\tilde g(x):=\max\big( g(x), g(x-c\dt) \big)$, and if 
$g \in \Delta^2_q(0,1)$, then $\forall \,t_n\leq T$:
\beno
  \|u_h^{n} - u^{n}\|   \leq  CT\ \frac{h^\qkell}{\dt} + CT\, \dt^{3/2}.
\eeno
\end{thm}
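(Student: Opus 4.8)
The plan is to mimic the telescoping argument of Theorem~\ref{th:1}, but now carrying through the nonlinear step $w\mapsto \Pi_h(\max(G_\dt(w),\tilde g))$ and using the exact DPP relation \eqref{eq:vn1exact}. The starting point is to compare, at each time level, the numerical update applied to $u_h^n$ with the same update applied to $u^n$, and then to compare that latter quantity with the true $u^{n+1}$. Concretely, I would write $u_h^{n+1}-u^{n+1} = \big(u_h^{n+1}-\Pi_h(\max(G_\dt(u^n),\tilde g))\big) + \big(\Pi_h(\max(G_\dt(u^n),\tilde g))-u^{n+1}\big)$ and estimate the two pieces separately.

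For the first piece I would use three elementary facts: $\Pi_h$ is an $L^2$-contraction; the map $f\mapsto \max(f,\tilde g)$ is $1$-Lipschitz in $L^2$ (pointwise, $|\max(a,\tilde g)-\max(b,\tilde g)|\le|a-b|$); and, for the SLDG operator, $\|G_\dt(w)-G_\dt(z)\|=\|\Pi_h(w(\cdot-c\dt)-z(\cdot-c\dt))\|\le\|w-z\|$ by contraction of $\Pi_h$ and periodicity, exactly as in Theorem~\ref{th:1}. Chaining these gives $\|u_h^{n+1}-\Pi_h(\max(G_\dt(u^n),\tilde g))\|\le\|u_h^n-u^n\|$, so this term contributes nothing beyond the accumulated error.

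For the second piece I would insert the exact solution's update. By the DPP \eqref{eq:vn1exact}, $u^{n+1}=\max(u^n(\cdot-c\dt),g_\dt)$. On the one hand $G_\dt^{SL}(u^n)=\Pi_h(u^n(\cdot-c\dt))$, so $\|G_\dt(u^n)-u^n(\cdot-c\dt)\|\le Ch^\qkell$ by Lemma~\ref{lem:bound1} (here is where the ``no shattering'' hypothesis enters: $u^n\in C^{\ell+1}_{p,L,c_0}$, so the projection error is $O(h^\qkell)$). On the other hand $\|\tilde g-g_\dt\|$ is the obstacle consistency error. Using again that $\max(\cdot,\cdot)$ is $1$-Lipschitz in each argument and that $\Pi_h$ is a contraction,
\begin{eqnarray*}
 \|\Pi_h(\max(G_\dt(u^n),\tilde g))-u^{n+1}\|
 &\le& \|\Pi_h(\max(G_\dt(u^n),\tilde g))-\Pi_h u^{n+1}\| + \|\Pi_h u^{n+1}-u^{n+1}\|\\
 &\le& \|G_\dt(u^n)-u^n(\cdot-c\dt)\| + \|\tilde g - g_\dt\| + \|u^{n+1}-\Pi_h u^{n+1}\|\\
 &\le& Ch^\qkell + \|\tilde g - g_\dt\|,
\end{eqnarray*}
where in the last line the projection error on $u^{n+1}$ is again $O(h^\qkell)$ since $u^{n+1}\in C^{\ell+1}_{p,L,c_0}$. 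Combining the two pieces yields the one-step recursion $\|u_h^{n+1}-u^{n+1}\|\le\|u_h^n-u^n\|+Ch^\qkell+C\|\tilde g-g_\dt\|$, together with $\|u_h^0-u^0\|=\|\Pi_h u_0-u_0\|\le Ch^\qkell$. Summing over the at most $n+1\le T/\dt+1$ time levels gives the stated bound $\|u_h^n-u^n\|\le CT h^\qkell/\dt + CT\|\tilde g-g_\dt\|/\dt$. Finally, part $(i)$ is immediate since $\tilde g=g_\dt$ makes the last term vanish, and part $(ii)$ follows by substituting the estimate $\|\tilde g-g_\dt\|\le C\sqrt q\,\dt^{5/2}$ from Lemma~\ref{lem:gapprox}, which turns the last term into $CT\dt^{3/2}$.

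The only delicate point is the second piece: one must be careful that $u^{n+1}$ is genuinely the exact update of $u^n$ (this is precisely the content of the DPP, not an approximation), and that the ``no shattering'' property is invoked at \emph{every} time level $t_n$, with constants uniform in $n$ — which is exactly what Definition~\ref{def:noshat} guarantees. Everything else is a routine combination of the $L^2$-contractivity of $\Pi_h$, the Lipschitz property of the pointwise max, and the projection estimate of Lemma~\ref{lem:bound1}.
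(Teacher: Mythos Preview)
Your proof is correct and follows essentially the same approach as the paper: both derive the one-step recursion $\|u_h^{n+1}-u^{n+1}\|\le\|u_h^n-u^n\|+Ch^\qkell+\|\tilde g-g_\dt\|$ via the $L^2$-contractivity of $\Pi_h$, the pointwise Lipschitz property of $\max$, the DPP identity \eqref{eq:vn1exact}, and Lemma~\ref{lem:bound1} applied under the no-shattering hypothesis. The only cosmetic difference is the intermediate quantity you insert ($\Pi_h(\max(G_\dt(u^n),\tilde g))$, the scheme applied to the exact data) versus the paper's choice ($\Pi_h u^{n+1}$), which amounts to the same estimate.
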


\begin{proof}
By Lemma~\ref{lem:bound1} and the ``no shattering" assumption,
\be \label{eq:b0}
  \|\Pi_h u^{n} - u^{n}\|  \leq C h^\qkell .
\ee
Hence, from the definitions, 
\beno
  & &\|u_h^{n+1} - u^{n+1}\| 
    \leq  \|u_h^{n+1} - \Pi_h u^{n+1}\|  + \|\Pi_h u^{n+1} - u^{n+1}\|  \\
  & &\leq  \|\Pi_h( \max(G_{\dt}^{SL} (u_h^n), \tilde g) ) - 
\Pi_h( \max(u^{n}(\cdot - c\dt), g_\dt)) \|    + C h^\qkell\\
  & &\leq  \| \max(G_{\dt}^{SL} (u_h^n), g_\dt)  - \max(u^{n}(\cdot 
- c\dt),\tilde g) \|    + C h^\qkell.
\eeno
Using the fact that
\beno
  |\max(a_1,b_1)- \max(a_2,b_2)|\leq |a_1-a_2| + |b_1-b_2|,
\eeno
we obtain
\be
 && \hspace{-1cm} \|u_h^{n+1} - u^{n+1}\| 
    \ \leq \ \| G_{\dt}^{SL} (u_h^n) - u^n(\cdot - c\dt)\| 
+ \| \tilde g -  g_\dt \|  +  C h^\qkell  \nonumber\\
    && 
    \leq  \| \Pi_h(u_h^n (\cdot - c\dt))- \Pi_h (u^n(\cdot - c\dt))\| 
+ \| \tilde g -  g_\dt \|  +  C h^\qkell \nonumber \\
   & &\leq  \| u_h^n- u^n\|  + \| \tilde g -  g_\dt \|  +  C h^\qkell  
\label{eq:almost_final_bound}
\ee
where we have used again \eqref{eq:b0} and the periodic boundary 
condition. Using Lemma~\ref{lem:gapprox} and by induction on $n$, 
we are done.
\end{proof}

\begin{rem}\label{rem:SLDG1}
Defining $\tilde g$ as in $(ii)$, and assuming $\min(\ell,k)\geq 1$,
the error is bounded by $O(\frac{h^{3/2}}{\dt}) + O(\dt^{3/2})$.
Therefore the optimal estimate is obtained when $h^{3/2} \equiv 
\dt^{5/2}$, or $\dt\equiv h^{3/5}$, and the error is of order $O(h^{9/10})$.
\end{rem}

\subsection{Convergence of the SLDG scheme defined with Gauss-Legendre quadrature points}
Now we turn to scheme \eqref{eq:DGscheme2} for the nonlinear equation 
\eqref{eq:02}. 
In particular, the scheme writes: initialize with 
$ u_h^0:=\Pi_h u_0$, and 
 $u_h^{n+1}$ is  defined as the unique polynomial in $V_h$ such that:
\be\label{eq:sldg_obs2}
   u^{n+1}_h(\xja):= \max(G_{\dt}^{SL} (u_h^{n})(x^j_\ma),\ 
\tilde g(x^j_\ma)), \quad \forall j, \,\ma.
\ee

\begin{thm}\label{th:3}
Let $\ell\geq 0$ and assume that the exact solution is not shattering 
in the sense of Definition~\ref{def:noshat}.
The following error bound holds:
\beno
  \|u_h^{n} - u^{n}\|   \leq  CT\ \frac{h^\qkell}{\dt} + CT\,\frac{\|\tilde g - g_\dt\|}{\dt},
\eeno
for some constant $C\geq 0$ independent of $n$. In particular,

$(i)$ If $\tilde g= g_\dt$, then $\forall t_n\leq T$:
\beno
  \|u_h^{n} - u^{n}\|  \leq  CT\ \frac{ h^\qkell}{\dt}.
\eeno

$(ii)$ If $\tilde g(x):=\max\big( g(x), g(x-c\dt) \big)$ and $g \in 
\Delta^2_q(0,1)$, then $\forall t_n\leq T$:
\beno
 \|u_h^{n} - u^{n}\|   \leq  CT\ \frac{h^\qkell}{\dt} + CT\, \dt \sqrt{\dt+h}.
\eeno
\end{thm}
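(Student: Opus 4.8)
The plan is to mimic the structure of the proof of Theorem~\ref{th:2}, replacing the $L^2$ projection of the maximum by the Gauss--Legendre interpolation and exploiting the identity $\|f\|_{\ell^2}=\|f\|_{L^2}$ valid for every $f\in V_h$. Write $u_h^{n+1}=\mathcal I_h\big(\max(G_\dt^{SL}(u_h^{n}),\tilde g)\big)$, where $\mathcal I_h$ denotes interpolation at the Gauss--Legendre nodes $(\xja)$; by \eqref{eq:sldg_obs2} this reproduces the prescribed nodal values exactly. Note first that, since $u$ is not shattering, each of $u^{n}$, $u^{n+1}$ and the spatial translate $u^{n}(\cdot-c\dt)$ belongs to $\mathcal C^{\ell+1}_{p,L,c_0}(0,1)$ (translation by $c\dt$, with periodic extension, preserves the Lipschitz constant, the $(\ell+1)$-th derivative bound, and the number of non-regular points), so Lemmas~\ref{lem:bound1} and~\ref{lem:bound3} apply to all of them.

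First I would split
$$\|u_h^{n+1}-u^{n+1}\|\ \le\ \|u_h^{n+1}-\Pi_h u^{n+1}\|+\|\Pi_h u^{n+1}-u^{n+1}\|,$$
the last term being $\le C h^{\qkell}$ by Lemma~\ref{lem:bound1}. Since $u_h^{n+1}-\Pi_h u^{n+1}\in V_h$, its $L^2$ norm equals its $\ell^2$ norm; inserting $u^{n+1}$ at the nodes and using Lemma~\ref{lem:bound3} to bound $\|u^{n+1}-\Pi_h u^{n+1}\|_{\ell^2}\le C h^{\qkell}$, it remains to control $\|u_h^{n+1}-u^{n+1}\|_{\ell^2}$. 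At each node, $u_h^{n+1}(\xja)=\max\big(G_\dt^{SL}(u_h^{n})(\xja),\tilde g(\xja)\big)$ by definition, while the exact DPP \eqref{eq:vn1exact} gives $u^{n+1}(\xja)=\max\big(u^{n}(\xja-c\dt),g_\dt(\xja)\big)$; the inequality $|\max(a_1,b_1)-\max(a_2,b_2)|\le|a_1-a_2|+|b_1-b_2|$ then yields
$$\|u_h^{n+1}-u^{n+1}\|_{\ell^2}\ \le\ \|G_\dt^{SL}(u_h^{n})-u^{n}(\cdot-c\dt)\|_{\ell^2}+\|\tilde g-g_\dt\|_{\ell^2}.$$

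Next I would bound the first term on the right by inserting $\Pi_h\big(u^{n}(\cdot-c\dt)\big)$: the difference $G_\dt^{SL}(u_h^{n})-\Pi_h\big(u^{n}(\cdot-c\dt)\big)=\Pi_h\big((u_h^{n}-u^{n})(\cdot-c\dt)\big)$ lies in $V_h$, so its $\ell^2$ norm equals its $L^2$ norm, which is $\le\|(u_h^{n}-u^{n})(\cdot-c\dt)\|=\|u_h^{n}-u^{n}\|$ by $L^2$-stability of $\Pi_h$ and periodicity; the remaining term $\|\Pi_h\big(u^{n}(\cdot-c\dt)\big)-u^{n}(\cdot-c\dt)\|_{\ell^2}\le C h^{\qkell}$ by Lemma~\ref{lem:bound3}. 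Collecting everything gives the one-step recursion
$$\|u_h^{n+1}-u^{n+1}\|\ \le\ \|u_h^{n}-u^{n}\|+C h^{\qkell}+\|\tilde g-g_\dt\|_{\ell^2},$$
together with $\|u_h^{0}-u^{0}\|=\|\Pi_h u_0-u_0\|\le C h^{\qkell}$. Summing over $n$ steps with $n\dt\le T$ produces $\|u_h^{n}-u^{n}\|\le(n+1)\big(C h^{\qkell}+\|\tilde g-g_\dt\|_{\ell^2}\big)\le CT\frac{h^{\qkell}}{\dt}+CT\frac{\|\tilde g-g_\dt\|_{\ell^2}}{\dt}$, which is the general bound (here the obstacle term enters through its $\ell^2$ norm, in contrast with Theorem~\ref{th:2}). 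Part~$(i)$ is immediate since $\tilde g=g_\dt$; for part~$(ii)$ I would plug the estimate $\|\tilde g-g_\dt\|_{\ell^2}\le C\sqrt{q}\,\sqrt{\dt+h}\,\dt^{2}$ from \eqref{eq:g_estimate2b} into the general bound, giving the stated $CT\frac{h^{\qkell}}{\dt}+CT\,\dt\sqrt{\dt+h}$.

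The argument is essentially bookkeeping once the right quantity to iterate is identified; the one point that needs care — and which I would regard as the crux — is to make sure every passage between $\|\cdot\|_{L^2}$ and $\|\cdot\|_{\ell^2}$ is applied to a difference of two elements of $V_h$ (so that the Gauss--Legendre exactness identity is legitimate), and that the nonlinear \emph{max} step is handled purely at the nodal level via the exact DPP, so that no interpolation-error estimate for $\mathcal I_h$ on non-polynomial functions is ever needed. A secondary point is to record that the $\ell^2$ (rather than $L^2$) projection bound of Lemma~\ref{lem:bound3} is exactly what is required here, which is why the obstacle contribution appears in $\ell^2$ norm and part~$(ii)$ carries the extra factor $\sqrt{\dt+h}$ compared with Remark~\ref{rem:SLDG1}.
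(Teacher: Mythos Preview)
Your proof is correct and follows essentially the same approach as the paper's own proof: the same split via $\Pi_h u^{n+1}$, the same passage between $\|\cdot\|$ and $\|\cdot\|_{\ell^2}$ on differences of $V_h$ elements, the same nodal application of the DPP and the $\max$ inequality, and the same insertion of $\Pi_h\big(u^n(\cdot-c\dt)\big)$ via Lemma~\ref{lem:bound3}. You even correctly flag that the obstacle contribution enters in $\ell^2$ norm (the paper's proof also arrives at $\|\tilde g-g_\dt\|_{\ell^2}$, so the $\|\tilde g-g_\dt\|$ in the theorem display should really be read as the $\ell^2$ norm).
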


\begin{proof}
In view of Lemma~\ref{lem:bound1} and the ``no shattering" property,  
we have
$$
  \| u^{n+1}- \Pi_h u^{n+1} \|  \leq  C  h^\qkell.
$$

Now we turn back to the error estimate, and consider the case of 
$\tilde g= g_\dt$:
\beno
  &&\| u_h^{n+1} - u^{n+1} \| 
   \leq  \| u_h^{n+1} - \Pi_h u^{n+1} \|  + C  h^\qkell \\
&&   =\| u_h^{n+1} - \Pi_h u^{n+1} \|_{\ell^2} + C  h^\qkell \\
&&   \leq \| u_h^{n+1} -  u^{n+1} \|_{\ell^2} + C  h^\qkell \\
  & &\leq   \left(\sum_{j,\ma} w^j_\ma  \left| u_h^{n+1}(\xja) 
- u^{n+1}(x^j_a) \right|^2 h_j \right)^{1/2} + C  h^\qkell
  \eeno
where in the third line we have used~\eqref{eq:bound3} for $u^{n+1}$.
Because of the DPP, for all points $x$, the exact solution satisfies:
$$
   u^{n+1}(x)=\max(u^n( x - c\dt),\ g_\dt(x)).
$$
Therefore, for $\forall j, \,\ma$:
\beno 
  \label{eq:bunvn}
  \bigg|(u_h^{n+1}-u^{n+1})(x^j_\ma)\bigg| \leq
  \big|G_{\dt}^{SL} (u_h^{n})(x^j_\ma) -  u^n( x^j_\ma - c\dt) \big| 
+ \big|\tilde g(x^j_\ma) - g_\dt(x^j_\ma) \big| 
\eeno
and
\beno
  \| u_h^{n+1} - u^{n+1} \| 
  & \leq & \left(\sum_{j,\ma} w^j_\ma  \big|G_{\dt}^{SL} (u_h^{n})
(x^j_\ma)-  u^n( x^j_\ma - c\dt) \big|^2 h_j \right)^{1/2} \\
  &      & 
     \hspace{2cm} + \left(\sum_{j,\ma} w^j_\ma  \big|\tilde g(x^j_\ma) 
- g_\dt(x^j_\ma) \big|^2 h_j \right)^{1/2} + C  h^\qkell \\
  & \leq & \|G_{\dt}^{SL} (u_h^{n})-u^n(\cdot - c \dt)\|_{\ell^2} 
+\|\tilde g -g_\dt\|_{\ell^2} + C  h^\qkell \\
  & \leq & \|G_{\dt}^{SL} (u_h^{n})-\Pi_h u^n(\cdot - c \dt)\|_{\ell^2} 
+\|\tilde g -g_\dt\|_{\ell^2} + C  h^\qkell \\
  & =    & \|\Pi_h u_h^n(\cdot -c\dt) -\Pi_h u^n(\cdot - c 
\dt)\|_{\ell^2} +\|\tilde g -g_\dt\|_{\ell^2} + C  h^\qkell \\
  & =    & \|\Pi_h u_h^n(\cdot -c\dt) -\Pi_h u^n(\cdot - c \dt)\|  
+\|\tilde g -g_\dt\|_{\ell^2} + C  h^\qkell \\
  & \leq & \|u_h^n(\cdot -c\dt) - u^n(\cdot - c \dt)\|  +\|\tilde g 
-g_\dt\|_{l^2} + C  h^\qkell \\
  & \leq & \|u_h^n- u^n\|  +\|\tilde g -g_\dt\|_{\ell^2} + C  h^\qkell
\eeno
where in the fourth line we have used Lemma~\ref{lem:bound3} for the 
function $u^n(\cdot-c\dt)$.  
Using Lemma~\ref{lem:gapprox} and by induction on $n$, we are done.
\end{proof}

\begin{rem}\label{rem:SLDG2}
Defining $\tilde g$ as in $(ii)$, and assuming $\min(\ell,k)\geq 1$,
the error is bounded by $O(\frac{h^{3/2}}{\dt}) + O(\dt \sqrt{\dt+ h}) $.
Therefore the optimal estimate is obtained when $h^{3/2} \equiv \dt^2 \sqrt{\dt+h}$. 
So $\frac{h}{\dt} \conv 0$, $h^{3/2} \equiv \dt^{5/2}$, or $\dt \equiv h^{3/5}$ (as in Remark~\ref{rem:SLDG1}),
and the error of the scheme defined with Gauss-Legendre quadrature points is again of order $O(h^{9/10})$ for this particular time stepping.
\end{rem}

\section{Convergence of RKDG schemes}
\label{sec:rkdg}

In this section, we will prove convergence for the RKDG schemes.
 We will proceed in three steps similar to the previous section.

\medskip

Firstly, let us recall the following properties for the bilinear 
operator $\mathcal H$.

\begin{lem}\label{lem:h} \cite{Zhang_Shu_RK3_2010}
For any $\phi_h, \varphi_h \in V_h$, we have
\beno
&& \mathcal H(\phi_h, \varphi_h) +\mathcal H(\varphi_h, \phi_h) = -\sum_j c [\phi_h]_{j+1/2} \cdot [\varphi_h]_{j+1/2}\\
&& \mathcal H(\phi_h, \phi_h)=-\frac{1}{2} \sum_j c [\phi_h]^2_{j+1/2}. 
\eeno
\end{lem}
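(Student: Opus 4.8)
The plan is to prove the first identity by an integration-by-parts computation carried out cell by cell, and then to obtain the second identity as the special case $\varphi_h=\phi_h$. Throughout I write $[\phi_h]_\jpud:=(\phi_h)^+_\jpud-(\phi_h)^-_\jpud$ for the jump of $\phi_h$ at the node $x_\jpud$, and I use the periodic boundary conditions to identify the node $x_{1/2}$ with $x_{N+\ud}$ when summing over cells.

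First I would treat the volume terms. Since $\phi_h$ and $\varphi_h$ are polynomials on $I_j$, the product $\phi_h\varphi_h$ is smooth there, so $\int_{I_j}c\,\phi_h(\varphi_h)_x\,dx+\int_{I_j}c\,\varphi_h(\phi_h)_x\,dx=c\int_{I_j}(\phi_h\varphi_h)_x\,dx=c\big((\phi_h\varphi_h)^-_\jpud-(\phi_h\varphi_h)^+_\jmud\big)$. Adding to this the flux terms appearing in $\mathcal H_j(\phi_h,\varphi_h)+\mathcal H_j(\varphi_h,\phi_h)$ and collecting the one-sided traces, I expect the contribution of cell $I_j$ to reduce to $-c(\phi_h)^-_\jpud(\varphi_h)^-_\jpud-c(\phi_h)^+_\jmud(\varphi_h)^+_\jmud+c(\phi_h)^-_\jmud(\varphi_h)^+_\jmud+c(\phi_h)^+_\jmud(\varphi_h)^-_\jmud$.

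Then I would sum over $j$ and reindex the three node-$(j-\ud)$ terms to the node $j+\ud$, which is legitimate under periodicity. Each node $x_\jpud$ then collects exactly $c\big(-(\phi_h)^-_\jpud(\varphi_h)^-_\jpud-(\phi_h)^+_\jpud(\varphi_h)^+_\jpud+(\phi_h)^-_\jpud(\varphi_h)^+_\jpud+(\phi_h)^+_\jpud(\varphi_h)^-_\jpud\big)$, and expanding the product of jumps shows that this is precisely $-c\,[\phi_h]_\jpud[\varphi_h]_\jpud$. This gives the first identity; setting $\varphi_h=\phi_h$ yields $2\mathcal H(\phi_h,\phi_h)=-\sum_j c\,[\phi_h]^2_\jpud$, which is the second.

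The computation is entirely elementary; the only point requiring care is the bookkeeping of the one-sided traces $(\cdot)^\pm$ at the cell interfaces together with the reindexing step, where periodicity is used so that the boundary node matches up. I do not expect any genuine obstacle here --- this is the standard summation-by-parts identity for the upwind DG bilinear form, and the argument follows \cite{Zhang_Shu_RK3_2010}.
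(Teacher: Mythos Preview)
Your argument is correct and is exactly the standard summation-by-parts computation for the upwind DG form; the paper does not give its own proof here but simply cites \cite{Zhang_Shu_RK3_2010}, where the same computation appears. There is nothing to add --- your cell-by-cell integration by parts, reindexing under periodicity, and the identification of the nodal contribution with $-c[\phi_h]_\jpud[\varphi_h]_\jpud$ are all fine.
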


We also recall inverse inequalities \cite{ciarlet1978finite} for the finite element space $V_h$. 
In particular, there exists a constant C (independent of $h$), such that,
for any $\varphi_h \in V_h$, 
$$\|(\varphi_h)_x\| \leq C h^{-1} \|\varphi_h\|, \qquad \|\varphi_h\|_{L^\infty} \leq C h^{-1/2} \|\varphi_h\|.$$

\subsection{Convergence of the RKDG scheme for the linear advection equation}
We first consider the linear equation  $v_t+ c v_x=0$, for which
$$ v(t+\dt)= v(t,x-c\dt). $$
We still denote $v^n(\cdot)=v(t^n,\cdot)$. In particular, the scheme writes: initialize with 
$ v_h^0:=\Pi_h v_0$, and 
$v_h^{n+1}=G_{\dt}^{RK} (v_h^n)$ for $n\geq 0$.

This convergence proof closely follows the work in \cite{Zhang_Shu_RK3_2010} for smooth solution, 
but   additional difficulties are encountered because we consider solutions with less regularity. 
The main technique is to introduce piecewisely defined intermediate 
stage functions and the careful treatment of intervals containing 
irregular points.

\begin{thm}\label{th:rkdg11} 
We consider  $v_t + c v_x=0$, $v(0,x)=v_0(x)$.
Let $v_0$ be in $\mathcal C^{\ell+1}_{p,L,c_0}(0,1)$, $\ell\geq 2$, $k \geq 1$, and assume the 
CFL condition 
$$\dt \leq C_0 h$$
for $C_0$ small enough (the usual CFL condition for stability of the RKDG scheme).
The following bound holds:
\beno
  \|v_h^{n} - v^{n}\|   \leq  C_1 (h  + \frac{h^3}{\dt^2})^{1/2},
\eeno
for some constant $C_1\geq 0$ independent of $h, \dt, v_h$.

In particular, if $\dt/h$ is bounded from below ($\frac{\dt}{h} \geq 
\bar C_0$ for some constant $\bar C_0 >0$), then 
\beno
  \|v_h^{n} - v^{n}\|   \leq  C_1 h^{1/2}.
\eeno
\end{thm}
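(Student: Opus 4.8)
The plan is to adapt the smooth-solution RKDG error analysis of Zhang--Shu \cite{Zhang_Shu_RK3_2010} to the low-regularity setting, the main new ingredient being the careful handling of the (boundedly many) irregular cells of $v^n$. First I would introduce the error decomposition $v_h^n - v^n = \xi_h^n - \eta^n$, where $\xi_h^n := v_h^n - \pro v^n \in V_h$ and $\eta^n := v^n - \pro v^n$; by Lemma~\ref{lem:bound1} (applied with $P_h = \pro$ and $\ell \geq 2$, $k\geq 1$, so $q_{k,\ell}=3/2$) we already control $\|\eta^n\| \leq C h^{3/2}$, so it suffices to estimate $\|\xi_h^n\|$. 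The initial error is $\|\xi_h^0\| \leq \|v_h^0 - v^0\| + \|\eta^0\| \leq Ch^{3/2}$ since $v_h^0 = \Pi_h v_0$.

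Next I would derive the one-step recursion for $\xi_h^n$. Writing the exact solution's evolution $v^{n+1}(x) = v^n(x-c\dt)$ and using that the linear transport operator $\mathcal H$ together with the TVD-RK3 update is exact on the ``reference'' evolution only up to a local truncation error, I would plug $v^n$ (or rather suitable piecewise-defined intermediate stage functions mimicking $v_h^{n,1}, v_h^{n,2}$) into the scheme and subtract. The energy identities of Lemma~\ref{lem:h}, namely $\mathcal H(\phi_h,\phi_h) = -\tfrac12\sum_j c[\phi_h]_{j+1/2}^2 \leq 0$ and the skew-symmetry-up-to-jumps relation, give $L^2$-stability of the RK3 update under the CFL condition $\dt \leq C_0 h$ (this is exactly the Zhang--Shu stability argument, which tolerates third-order-in-$\dt$ growth terms controlled by the inverse inequalities $\|(\varphi_h)_x\| \leq Ch^{-1}\|\varphi_h\|$). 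The outcome should be an inequality of the form $\|\xi_h^{n+1}\| \leq (1 + C\dt)\|\xi_h^n\| + \tau_n$, where $\tau_n$ is the projection-consistency contribution at step $n$.

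The crucial point is bounding $\tau_n$. On regular cells the standard superconvergence/approximation estimates for the Gauss--Radau projection give a consistency error of size $O(h^{k+1})$ per step times appropriate $\dt$ factors, as in the smooth case; but on the $O(p)$ cells where $v^n$ is only Lipschitz, the projection error is merely $O(h^{3/2})$ in $L^2$ over those cells (Lemma~\ref{lem:bound1}), and — more delicately — the jump terms $[\eta^n]_{j+1/2}$ and the derivative $(\eta^n)_x$ on those cells must be estimated using only the inverse inequality plus the $O(h^{3/2})$ $L^2$ bound, yielding a per-step contribution $\tau_n \lesssim \dt\,\|\eta^n\|/h + \dots \lesssim \dt\, h^{3/2}/h = \dt\, h^{1/2}$ from the bad cells, together with an $h^{-1}$-type loss from differentiating. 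Summing $\tau_n$ over $n \leq T/\dt$ steps and combining with the multiplicative factor $(1+C\dt)^n \leq e^{CT}$ gives $\|\xi_h^n\| \leq C(\,h^{3/2}/\dt + h^{1/2}\dots\,)$; squaring and organizing the terms produces the stated bound $\|v_h^n - v^n\| \leq C_1(h + h^3/\dt^2)^{1/2}$. I expect the main obstacle to be precisely this bookkeeping on the bad cells: one must verify that differentiating the low-regularity projection error via the inverse inequality, and feeding it through the three RK stages, does not lose more than the one power of $h$ already accounted for in the $h^3/\dt^2$ term, and that the boundedness of the number $p$ of irregular points (guaranteed by the no-shattering framework, here just by $v_0 \in \mathcal C^{\ell+1}_{p,L,c_0}$) is what keeps the bad-cell sum from accumulating. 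Finally, specializing to $\dt/h \geq \bar C_0$ makes $h^3/\dt^2 \leq \bar C_0^{-2} h$, so the bound collapses to $C_1 h^{1/2}$.
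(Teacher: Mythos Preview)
Your overall plan --- adapt Zhang--Shu, decompose via the Gauss--Radau projection $\pro$, isolate the finitely many bad cells --- is the paper's plan. But the proposal misidentifies where the low-regularity difficulty actually sits and proposes a mechanism that does not work as stated.

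You say the jump terms $[\eta^n]_{j+1/2}$ and $(\eta^n)_x$ on bad cells are to be controlled by the inverse inequality applied to the $O(h^{3/2})$ $L^2$ bound. But $\eta^n = v^n - \pro v^n \notin V_h$, so the inverse inequality does not apply to it; more to the point, the whole reason for choosing $\pro$ is that $\mathcal H(\eta^n,\varphi_h)=0$ identically (interpolation at $x_{j+1/2}^-$ kills the flux term, orthogonality to $P^{k-1}$ kills the volume term), so $\eta^n$ itself never needs such estimates. The genuine obstruction is that the standard Zhang--Shu reference stages $v^{(2)} = \tfrac34 v^n + \tfrac14 v^{(1)} - \tfrac{c\dt}{4}(v^{(1)})_x$ and $v^{(3)}$ require $(v^n)_{xx}$ and $(v^n)_{xxx}$, which do not exist at the irregular points of $v^n$. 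The paper's key construction --- which you only gesture at with ``piecewise-defined intermediate stage functions'' --- is to \emph{redefine} $v^{(2)}$ and $v^{(3)}$ on a three-cell neighborhood $\mathcal B^n$ of each irregular point using only $(v^n)_x$, and then to add a piecewise-linear correction $L_a$ of size $O(\dt^2)$ supported on $\mathcal B^n$ to restore continuity at $\partial\mathcal B^n$ (without this the boundary terms in $\mathcal H_j$ are ill-defined). This modification produces extra error terms of the form $\dt\sum_{j\in\mathcal I^n}\mathcal H_j(v^n - v^{(i)},\varphi_h)$, which are bounded by $C\dt^2 h^{-1/2}\|\varphi_h\|_{\mathcal B^n}$ via inverse inequalities applied to $\varphi_h\in V_h$ (not to $\eta$), and absorbed under the CFL condition. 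The energy argument then yields a recursion on \emph{squares}, $\|\xi^{n+1}\|^2 \leq (1+C\dt)\|\xi^n\|^2 + C\dt\,h$, rather than the additive form $\|\xi^{n+1}\|\leq(1+C\dt)\|\xi^n\|+\tau_n$ you wrote; this is what gives the $(h + h^3/\dt^2)^{1/2}$ shape of the final bound, and your derivation of the $h^{3/2}/\dt$ contribution is accordingly muddled.
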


\begin{proof} We need to introduce some intermediate stages of the exact solution. 
 Firstly we define
\beno
  & & v^{(1)}:=v^n - c \dt\  (v^n)_{x}
\eeno
where the spatial derivative $(v^n)_{x}$ should be understood in the weak sense. We notice that $v^{(1)}$ may become discontinuous at the irregular points of $v^n$.

To define the second intermediate stage $v^{(2)}$, we need to distinguish the ``good" and ``bad" intervals. 
Since $v^n \in  \mathcal C^{\ell+1}_{p,L,c_0}(0,1)$, when the mesh is fine enough, 
there are at most $p$ irregular intervals. Because of the CFL condition (which we assume implies in particular that $c \triangle t \leq h$), each irregular point at $t^n$ may influence at most three intervals at time $t^{n+1}$. Now we introduce sets 
\newcommand{\cJ}{\mathcal I}
$\cB^n$ and $\cJ^n$ such that
$$ 
  \cB^n:= \bigcup_j I_j, \textrm{s.t.}\, I_j \,\textrm{or its immediate neighbors contain an irregular point of}\, v^n
$$ 
and the corresponding set of indices:
$$ 
  \cJ^n:= \bigcup_j j, \textrm{s.t.}\, I_j \,\textrm{or its immediate neighbors contain an irregular point of}\, v^n.
$$ 
Therefore  $meas(\cB^n) \leq 3 p h$ and $Card(\cJ^n)\leq 3p$. (In the case of the irregular points located exactly at the cell interface $x_{j+1/2}$, we include the point's neighboring cells $I_j$, $I_{j+1}$ in $\cB^n$, and $j$, $j+1$ in $\cJ^n$.)

Now,   we define
\be
 & & \tilde{v}^{(2)}:=\left\{
  \begin{array}{ll}
    \frac{3}{4}v^n +\frac{1}{4} v^{(1)} - c \frac{\dt}{4}\ (v^{(1)})_x,  & \mbox{if } x \notin \cB^n,\\
    \frac{3}{4}v^n +\frac{1}{4} v^{(1)} - c \frac{\dt}{4}\ (v^{n})_x  \equiv  v^n  - c \frac{\dt}{2}\ (v^{n})_x,  & \mbox{if } x \in \cB^n, 
  \end{array} \right.
\ee
For points not located in $\cB^n$, the definition coincides with \cite{Zhang_Shu_RK3_2010}. For points in $\cB^n$, $v^n$ is used instead of $v^{(1)}$ to avoid discontinuity at the irregular points.
Notice that this causes $\tilde{v}^{(2)}$ to be discontinuous at $\partial \cB^n$. For example, if $x_a \in \partial \cB^n$, then the jump of $\tilde{v}^{(2)}$  at $x_a$ is of magnitude $c \frac{\dt}{4}\ (v^{(1)}-v^n)_x(x_a)$, and this is bounded by $C L \dt^2$. By these arguments, we could add a linear interpolating function defined by $\frac{1}{4} L_a(x)$ which is nonzero only on $\cB^n$   to enforce continuity at $\partial \cB^n$, and $||L_a||_\infty<C \dt^2$, i.e. we introduce

\be
 & & v^{(2)}:=\left\{
  \begin{array}{ll}
    \frac{3}{4}v^n +\frac{1}{4} v^{(1)} - c \frac{\dt}{4}\ (v^{(1)})_x,  & \mbox{if } x \notin \cB^n,\\
    v^n  - c \frac{\dt}{2}\ (v^{n})_x+\frac{1}{4}L_a(x),  & \mbox{if } x \in \cB^n, 
  \end{array} \right.
\ee
and $L_a$ is chosen to be a linear polynomial so that $v^{(2)}$ is continuous at $\partial \cB^n$.  We can now define

\be 
 & & v^{(3)}=\left\{
  \begin{array}{l l}
    \frac{1}{3}v^n +\frac{2}{3} v^{(2)}- c \frac{2\dt}{3}\ (v^{(2)})_x, & \mbox{if } x \notin \cB^n,\\
    \frac{1}{3}v^n +\frac{2}{3} v^{(2)}- c \frac{2\dt}{3}\ (v^{n})_x \equiv  v^n  - c \dt\ (v^{n})_x +\frac{1}{6} L_a(x), & \mbox{if } x \in \cB^n.
  \end{array} \right.
\ee
Notice that for $x \notin \cB^n$, the definition is still consistent with  \cite{Zhang_Shu_RK3_2010} for smooth solutions, and it is well defined because 
$\ell\geq 2$. However, for   irregular intervals, the definition is modified due to the lower regularity of the solution. 

\smallskip

 Now we are ready to define the errors
\be
\label{eqn:decompose}
&&e^{(1)}:=v^{(1)}-v_h^{n,1}, \quad \xi^{(1)}:=\pro v^{(1)}-v_h^{n,1}, \quad \eta^{(1)}:=\pro v^{(1)}-v^{(1)},\notag\\
&&e^{(2)}:=v^{(2)}-v_h^{n,2}, \quad \xi^{(2)}:=\pro v^{(2)}-v_h^{n,2}, \quad \eta^{(2)}:=\pro v^{(2)}-v^{(2)},\\
&&e^{n}:=v^{n}-v_h^{n}, \quad \xi^{n}:=\pro v^{n}-v_h^{n}, \quad \eta^{n}:=\pro v^{n}-v^{n}.\notag
\ee
Clearly, 
\beno
e^{(1)}=\xi^{(1)}-\eta^{(1)}, \quad e^{(2)}=\xi^{(2)}-\eta^{(2)}, \quad e^{n}=\xi^{n}-\eta^{n}.
\eeno
Our next step is to establish the error equations.
First, let us recall that the numerical solution satisfies:
\beno
\int_{I_j} v_h^{n,1} \varphi_h dx &=& \int_{I_j} v_h^n \varphi_h dx + \dt \mathcal H_j (v_h^n, \varphi_h), \quad \forall \varphi_h \in V_h\\
\int_{I_j} v_h^{n,2} \varphi_h dx &=& \frac{3}{4} \int_{I_j} v_h^n \varphi_h dx+ \frac{1}{4} \int_{I_j} v_h^{n,1} \varphi_h dx  +\frac{\dt}{4} \mathcal H_j (v_h^{n,1}, \varphi_h), \quad \forall \varphi_h \in V_h\\
\int_{I_j} v_h^{n+1} \varphi_h dx &=& \frac{1}{3} \int_{I_j} v_h^n \varphi_h dx+ \frac{2}{3} \int_{I_j} v_h^{n,2} \varphi_h dx  +\frac{2\dt}{3} \mathcal H_j (v_h^{n,2}, \varphi_h). \quad \forall \varphi_h \in V_h.
\eeno
 From the definitions of $v^{(1)}, v^{(2)}, v^{(3)}$, we can verify
\beno
\int_{I_j} v^{(1)} \varphi_h dx &=& \int_{I_j} v^n \varphi_h dx + \dt \mathcal H_j (v^n, \varphi_h), \quad \forall \varphi_h \in V_h\\
\int_{I_j} v^{(2)} \varphi_h dx &=&\frac{3}{4} \int_{I_j} v^n \varphi_h dx+ \frac{1}{4} \int_{I_j} v^{(1)} \varphi_h dx  +\frac{\dt}{4} \mathcal H_j (v^{(\star1)}, \varphi_h), \quad \forall \varphi_h \in V_h\\
&&+\left\{
  \begin{array}{l l}
    0,& j \notin \cJ^n\\
    \frac{1}{4}(L_a,  \varphi_h), & j \in \cJ^n
  \end{array} \right.\\
\int_{I_j} v^{(3)} \varphi_h dx &=& \frac{1}{3} \int_{I_j} v^n \varphi_h dx+ \frac{2}{3} \int_{I_j} v^{(2)} \varphi_h dx  +\frac{2\dt}{3} \mathcal H_j (v^{(\star2)}, \varphi_h), \quad \forall \varphi_h \in V_h
\eeno
where for $j \in \cJ^n, (\star1)=n, (\star2)=n;$ otherwise, $(\star1)=(1), (\star2)=(2).$ Notice that the formulations above are correct because we have enforced continuity of the first function appearing
in operator $\mathcal H_j$ in all cases. 
In particular, the procedure to enforce continuity of $v^{(2)}$ at $\partial \cB^n$ turns out to be necessary here.
Combining the previous two relations, we derive the error equations
\beno
\int_{I_j} e^{(1)} \varphi_h dx &=& \int_{I_j} e^n \varphi_h dx + \dt \mathcal H_j (e^n, \varphi_h), \quad \forall \varphi_h \in V_h\\
\int_{I_j} e^{(2)} \varphi_h dx &=& \frac{3}{4} \int_{I_j} e^n \varphi_h dx+ \frac{1}{4} \int_{I_j} e^{(1)} \varphi_h dx  +\frac{\dt}{4} \mathcal H_j (e^{(1)}, \varphi_h), \quad \forall \varphi_h \in V_h\\
&&+\left\{
  \begin{array}{l l}
    0,& j \notin \cJ^n\\
    \frac{\dt}{4} \mathcal H_j (v^n-v^{(1)}, \varphi_h)+\frac{1}{4}(L_a,  \varphi_h), & j \in \cJ^n
  \end{array} \right. \\
\int_{I_j} e^{n+1} \varphi_h dx& =& \int_{I_j} \Upsilon \varphi_h dx+ \frac{1}{3} \int_{I_j} e^n \varphi_h dx+ \frac{2}{3} \int_{I_j} e^{(2)} \varphi_h dx , \quad \forall \varphi_h \in V_h \\
&& +\frac{2\dt}{3} \mathcal H_j (e^{(2)}, \varphi_h)+\left\{
  \begin{array}{l l}
    0,& j \notin \cJ^n\\
    \frac{2\dt}{3} \mathcal H_j (v^n-v^{(2)}, \varphi_h), & j \in \cJ^n
  \end{array} \right. 
\eeno
where $\Upsilon=v^{n+1}-v^{(3)}.$ Using the decomposition of errors \eqref{eqn:decompose}, we get
\begin{subequations}
\label{eqn:erroreqn}
\begin{align}
\int_{I_j} \xi^{(1)} \varphi_h dx =& \int_{I_j} \xi^n \varphi_h dx + \dt \mathcal J_j (\varphi_h), \quad \forall \varphi_h \in V_h\\
\int_{I_j} \xi^{(2)} \varphi_h dx =& \frac{3}{4} \int_{I_j} \xi^n \varphi_h dx+ \frac{1}{4} \int_{I_j} \xi^{(1)} \varphi_h dx  +\frac{\dt}{4} \mathcal K_j (  \varphi_h), \quad \forall \varphi_h \in V_h\\
\int_{I_j} \xi^{n+1} \varphi_h dx =&  \frac{1}{3} \int_{I_j} \xi^n \varphi_h dx+ \frac{2}{3} \int_{I_j} \xi^{(2)} \varphi_h dx  +\frac{2\dt}{3} \mathcal L_j ( \varphi_h) , \quad \forall \varphi_h \in V_h \end{align}
\end{subequations}
where

\begin{subequations}
\begin{align}
\mathcal J_j (\varphi_h) =& \int_{I_j} \frac{1}{\dt}  (\eta^{(1)}-\eta^n)\varphi_h dx + \mathcal H_j (e^n, \varphi_h), \quad \forall \varphi_h \in V_h \label{eqnj}\\
 \mathcal K_j (\varphi_h) =&   \int_{I_j} \frac{1}{\dt}  (4 \eta^{(2)}-3 \eta^n -\eta^{(1)}) \varphi_hdx+\mathcal H_j (e^{(1)}, \varphi_h), \quad \forall \varphi_h \in V_h \label{eqnk}\\
 &+\left\{
  \begin{array}{l l}
    0,& j \notin \cJ^n\\
     \mathcal H_j (v^n-v^{(1)}, \varphi_h)+\frac{1}{\dt}(L_a,  \varphi_h), & j \in \cJ^n
  \end{array} \right. \notag \\
 \mathcal L_j (\varphi_h) =&   \int_{I_j} \frac{1}{2\dt}  (3\eta^{n+1}- \eta^n-2 \eta^{(2)} +3\Upsilon) \varphi_hdx+\mathcal H_j (e^{(2)}, \varphi_h), \quad \forall \varphi_h \in V_h \label{eqnl}\\
 &+\left\{
  \begin{array}{l l}
    0,& j \notin \cJ^n\\
     \mathcal H_j (v^n-v^{(2)}, \varphi_h), & j \in \cJ^n
  \end{array} \right. \notag
   \end{align}
\end{subequations}
We further denote $\mathcal J(\varphi_h)=\sum_j   \mathcal J_j (\varphi_h)$, $\mathcal K(\varphi_h)=\sum_j   \mathcal K_j (\varphi_h)$, $\mathcal L(\varphi_h)=\sum_j   \mathcal L_j (\varphi_h)$.
By letting $\varphi_h=\xi^n, 4\xi^{(1)}, 6 \xi^{(2)}$ in \eqref{eqnj}, \eqref{eqnk}, \eqref{eqnl}, respectively, we get the following energy equation for $\xi^n$, \cite{Zhang_Shu_RK3_2010} 
\be
&&3\|\xi^{n+1}\|^2-3\|\xi^n\|^2=\dt [ \mathcal J (\xi^n)+\mathcal K(\xi^{(1)}) + \mathcal L(\xi^{(2)}  ] \\
  &&+\|2\xi^{(2)}-\xi^{(1)}-\xi^n\|^2+3(\xi^{n+1}-\xi^n, \xi^{n+1}-2\xi^{(2)}+\xi^n)\notag
\ee
Now we define $\Pi_1:=\dt [ \mathcal J (\xi^n)+\mathcal K(\xi^{(1)}) + \mathcal L(\xi^{(2)}  ] $, $\Pi_2:=\|2\xi^{(2)}-\xi^{(1)}-\xi^n\|^2+3(\xi^{n+1}-\xi^n, \xi^{n+1}-2\xi^{(2)}+\xi^n)$. We will estimate those two terms separately.

\medskip
\textbf{Estimate of $\Pi_1$}

Firstly, we notice that
\beno 
\dt \mathcal J (\xi^n)&=&(\eta^{(1)}-\eta^n, \xi^n) +\dt \mathcal H(e^n, \xi^n) \\
&=&(\eta^{(1)}-\eta^n, \xi^n)  +\dt \mathcal H(\xi^n, \xi^n) \\
&=&(\eta^{(1)}-\eta^n, \xi^n) -\frac{\dt}{2} \sum_j c [\xi^n]_{j+1/2}^2 \\
&\leq& \|\eta^{(1)}-\eta^n\| \cdot \|\xi^n\| -\frac{\dt}{2} \sum_j c [\xi^n]_{j+1/2}^2 \\
&\leq&  \frac{1}{4 \dt \epsilon} \|\eta^{(1)}-\eta^n\|^2 + \epsilon \dt \|\xi^n\|^2 -\frac{\dt}{2} \sum_j c [\xi^n]_{j+1/2}^2
\eeno
where in the second  line we have used the property of the Legendre-Gauss-Radau 
projection to get $\mathcal H(\eta^n, \varphi_h)=0$. In the formulas above, $\epsilon$ is a positive constant of order 1. Since 
$$
\eta^{(1)}-\eta^n=\pro (v^{(1)}-v^n)-(v^{(1)}-v^n)=-\dt \,c \, \left (\pro (v^n)_x -(v^n)_x \right),
$$
similar to Lemma \ref{lem:bound1}, we get
\beno
\|\pro (v^n)_x -(v^n)_x \| &\leq& \|\pro (v^n)_x -(v^n)_x \|_{\cB^n} +\|\pro (v^n)_x -(v^n)_x\|_{I\backslash \cB^n} \\
 &\leq& C h^{1/2}+ C h^{\min(\ell,k+1)} \leq C h^{1/2},
\eeno
 Therefore $\|\eta^{(1)}-\eta^n\| \leq C \dt h^{1/2}$ and
$$
\dt \mathcal J (\xi^n) \leq  C \dt h  + \epsilon \dt \|\xi^n\|^2 -\frac{\dt}{2} \sum_j c [\xi^n]_{j+1/2}^2.
$$

Similarly,
\beno 
&&\dt \mathcal K (\xi^{(1)})\\
&&= (4 \eta^{(2)}- \eta^{(1)}-3 \eta^n+L_a, \xi^{(1)}) +\dt \mathcal H(e^{(1)}, \xi^{(1)}) +\dt \sum_{j \in \cJ^n} \mathcal H_j(v^n-v^{(1)}, \xi^{(1)}) \\
&&=(4 \eta^{(2)}- \eta^{(1)}-3 \eta^n+L_a, \xi^{(1)})  -\frac{\dt}{2} \sum_j c [\xi^{(1)}]_{j+1/2}^2+\dt \sum_{j \in \cJ^n} \mathcal H_j(v^n-v^{(1)}, \xi^{(1)}) \\
&&\leq \frac{1}{4\dt \epsilon} \|4 \eta^{(2)}- \eta^{(1)}-3 \eta^n\|^2 +\frac{1}{4\dt \epsilon} \| L_a\|^2 + \frac{\epsilon}{2} \dt \|\xi^{(1)}\|^2 -\frac{\dt}{2} \sum_j c [\xi^{(1)}]_{j+1/2}^2\\
&&+\dt \sum_{j \in \cJ^n} \mathcal H_j(v^n-v^{(1)}, \xi^{(1)})
\eeno
Since $||L_a||_\infty \leq C \dt^2$, and $L_a \neq 0$ only on $\cB^n$, therefore $||L_a||  \leq C \dt^2 h^{1/2}.$
Next, we will estimate the term $\|4 \eta^{(2)}- \eta^{(1)}-3 \eta^n\|$ and $\dt \sum_{j \in \cJ} \mathcal H_j(v^n-v^{(1)}, \xi^{(1)})$.
We can derive that
\[
4 \eta^{(2)}- \eta^{(1)}-3 \eta^n=\left\{
  \begin{array}{l l}
    - \dt c (\pro v^{(1)}_x- v^{(1)}_x), & x \notin \cB^n\\
    - \dt c (\pro v^n_x- v^n_x)+(\pro L_a -L_a), & x \in \cB^n 
  \end{array} \right.
\]
Because $L_a$ is a linear polynomial and $k \geq 1$, $\pro L_a -L_a=0$, and 
 similar to the previous argument, we get $\|4 \eta^{(2)}- \eta^{(1)}-3 \eta^n\| \leq C \dt h^{1/2}.$ 

As for $\dt \sum_{j \in \cJ} \mathcal H_j(v^n-v^{(1)}, \xi^{(1)})$, we have 
$$v^n-v^{(1)}=c \dt (v^n)_x,$$
therefore for any $j$
$$|(v^n-v^{(1)})_{j\pm1/2}| \leq C \dt \|v\|_{W^{1,\infty}}$$
and 
$$\|v^n-v^{(1)}\|_{I_j} \leq C \dt h^{1/2} \|v\|_{W^{1,\infty}}.$$
Hence
\beno
&&\mathcal H_j(v^n-v^{(1)}, \xi^{(1)})\\
&&=\int_{I_j} c (v^n- v^{(1)}) \xi^{(1)}_x dx - c (v^n-v^{(1)})^- (\xi^{(1)})_{j+1/2}^- + c (v^n-v^{(1)})^- (\xi^{(1)})_{j-1/2}^+\\
&&\leq C \dt h^{1/2} \|\xi^{(1)}_x\|_{I_j}+C \dt |(\xi^{(1)})_{j+1/2}^-|+C \dt |(\xi^{(1)})_{j-1/2}^+|\\
&& \leq C \dt h^{-1/2} \|\xi^{(1)}\|_{I_j}
\eeno
by inverse inequalities, and
\beno
\dt \sum_{j \in \cJ^n} \mathcal H_j(v^n-v^{(1)}, \xi^{(1)}) &\leq& C \dt^2 h^{-1/2} \|\xi^{(1)}\|_{\cB^n}\\
&\leq& C \dt^2 +\frac{\epsilon}{2} \dt \|\xi^{(1)}\|^2_{B^n}\\
&\leq& C \dt^2 +\frac{\epsilon}{2} \dt \|\xi^{(1)}\|^2,
\eeno
where in the second line we have used the  CFL condition $\dt \leq 
C_{cfl} h$. Putting everything together, and using the CFL condition 
again, we have
\beno 
&&\dt \mathcal K (\xi^{(1)})\leq C \dt h  +  \epsilon \dt \|\xi^{(1)}\|^2 -\frac{\dt}{2} \sum_j c [\xi^{(1)}]_{j+1/2}^2
\eeno

Finally,
\beno 
&&\dt \mathcal L (\xi^{(2)})\\
&&= \frac{1}{2} (3 \eta^{n+1}- 2\eta^{(2)}- \eta^n+ 3\Upsilon, \xi^{(2)}) +\dt \mathcal H(e^{(2)}, \xi^{(2)}) +\dt \sum_{j \in \cJ^n} \mathcal H_j(v^n-v^{(2)}, \xi^{(2)})\\
&&=  \frac{1}{2} (3 \eta^{n+1}- 2\eta^{(2)}- \eta^n+ 3\Upsilon, \xi^{(2)}) -\frac{\dt}{2} \sum_j c [\xi^{(2)}]_{j+1/2}^2 +\dt \sum_{j \in \cJ^n} \mathcal H_j(v^n-v^{(2)}, \xi^{(2)})\\
&&\leq  \frac{1}{8 \dt \epsilon} \|3 \eta^{n+1}- 2\eta^{(2)}- \eta^n\|^2+ \frac{9}{8\dt \epsilon} \|\Upsilon\|^2 + \epsilon \dt \|\xi^{(2)}\|^2 -\frac{\dt}{2} \sum_j c [\xi^{(2)}]_{j+1/2}^2\\
&&\quad +\dt \sum_{j \in \cJ^n} \mathcal H_j(v^n-v^{(2)}, \xi^{(2)}).
\eeno
Using the same argument as the previous terms, we have $\|3 \eta^{n+1}- 2\eta^{(2)}- \eta^n\| \leq C \dt h^{1/2}$ and $\dt \sum_{j \in \cJ^n} \mathcal H_j(v^n-v^{(2)}, \xi^{(2)}) \leq C \dt^2 +\epsilon \dt \|\xi^{(2)}\|^2$. 
As for $\Upsilon$, we have 
\beno
\|\Upsilon\|^2 &=& \int_{\cB^n} \Upsilon^2  dx + \int_{I\backslash \cB^n} \Upsilon^2 dx \\
& = &  \int_{\cB^n} (v^{n+1}-v^{(3)})^2  dx+ \int_{I\backslash \cB^n} \Upsilon^2 dx \\
& \leq &  \int_{\cB^n} (v^{n+1}-v^n+c \dt (v^n)_x-\frac{1}{6}L_a)^2  dx+(C \dt^4)^2 \\
&=& \int_{\cB^n} (v^{n+1}-v^n + c \dt (v^n)_x)^2  dx+C\dt^4h+(C \dt^4)^2\\
&\leq& C \dt^2 h+C\dt^4h+(C \dt^4)^2 \leq C \dt^2 h.
\eeno

Finally we obtain
\beno 
&&\dt \mathcal L (\xi^{(2)})\leq C \dt h  + \epsilon \dt \|\xi^{(2)}\|^2 -\frac{\dt}{2} \sum_j c [\xi^{(2)}]_{j+1/2}^2.
\eeno

Putting everything together, we have
\beno
\Pi_1 &\leq& C \dt h +\epsilon \dt \|\xi^n\|^2+\epsilon \dt \|\xi^{(1)}\|^2+\epsilon \dt \|\xi^{(2)}\|^2\\
&&-\frac{\dt}{2} \sum_j c [\xi^{n}]_{j+1/2}^2-\frac{\dt}{2} \sum_j c [\xi^{(1)}]_{j+1/2}^2-\frac{\dt}{2} \sum_j c [\xi^{(2)}]_{j+1/2}^2
\eeno

\medskip
\textbf{Estimate of $\Pi_2$}

To estimate $\Pi_2$, we first introduce
\beno
&&\mathbb{G}_1:=\xi^{(1)}-\xi^n\\
&&\mathbb{G}_2:=2 \xi^{(2)}-\xi^{(1)}-\xi^n\\
&&\mathbb{G}_3:=\xi^{n+1}-2 \xi^{(2)}+\xi^n.
\eeno

 From the error equation \eqref{eqn:erroreqn}, we can deduce
\begin{subequations}
\begin{align}
\int_{I_j} \mathbb{G}_1 \varphi_h dx =&   \dt \mathcal J_j (\varphi_h), \quad \forall \varphi_h \in V_h\\
\int_{I_j} \mathbb{G}_2 \varphi_h dx =&  \frac{\dt}{2} (\mathcal K_j (  \varphi_h) -\mathcal J_j (\varphi_h)), \quad \forall \varphi_h \in V_h\\
\int_{I_j} \mathbb{G}_3 \varphi_h dx =&  \frac{\dt}{3} (2\mathcal L_j ( \varphi_h) -\mathcal K_j ( \varphi_h)-\mathcal J_j ( \varphi_h) ), \quad \forall \varphi_h \in V_h \end{align}
\end{subequations}

Now, 
$$\Pi_2=(\mathbb{G}_2, \mathbb{G}_2)+3(\mathbb{G}_1, \mathbb{G}_3)+3(\mathbb{G}_2, \mathbb{G}_3)+3(\mathbb{G}_3, \mathbb{G}_3).$$
First, let us estimate $(\mathbb{G}_2, \mathbb{G}_2)+3(\mathbb{G}_1, \mathbb{G}_3).$
\beno
&&(\mathbb{G}_2, \mathbb{G}_2)+3(\mathbb{G}_1, \mathbb{G}_3)\\
&&=-\|\mathbb{G}_2\|^2 + 2 (\mathbb{G}_2, \mathbb{G}_2)+3(\mathbb{G}_1, \mathbb{G}_3)\\
&&=-\|\mathbb{G}_2\|^2 + \dt \left [ \mathcal K (\mathbb{G}_2)-\mathcal  J (\mathbb{G}_2)+2 \mathcal  L (\mathbb{G}_1) - \mathcal  K (\mathbb{G}_1) - \mathcal J (\mathbb{G}_1)  \right ]
\eeno
We have
\beno
&&\dt ( \mathcal K (\mathbb{G}_2)-\mathcal  J (\mathbb{G}_2))\\
&=&  (4 \eta^{(2)}-3\eta^n-\eta^{(1)}-(\eta^{(1)}-\eta^n)+L_a, \mathbb{G}_2) + \dt  \mathcal H (e^{(1)}-e^n, \mathbb{G}_2)\\
&&+\dt \sum_{j \in \cJ^n} \mathcal H_j(v^n-v^{(1)}, \mathbb{G}_2)\\
&=& (4 \eta^{(2)}-3\eta^n-\eta^{(1)}-(\eta^{(1)}-\eta^n)+L_a, \mathbb{G}_2 )+ \dt  \mathcal H (\mathbb{G}_1, \mathbb{G}_2)\\
&&+\dt \sum_{j \in \cJ^n} \mathcal H_j(v^n-v^{(1)}, \mathbb{G}_2) \\
&\leq & \|4 \eta^{(2)}-3\eta^n-\eta^{(1)}-(\eta^{(1)}-\eta^n)+L_a\|^2  +  \frac{1}{4}\|\mathbb{G}_2\|^2\\
&&+ \dt  \mathcal H (\mathbb{G}_1, \mathbb{G}_2) +\dt \sum_{j \in \cJ^n} \mathcal H_j(v^n-v^{(1)}, \mathbb{G}_2)\\
&\leq & C \dt^2 h  +  \frac{1}{4}\|\mathbb{G}_2\|^2+ \dt  \mathcal H (\mathbb{G}_1, \mathbb{G}_2) +\dt \sum_{j \in \cJ^n} \mathcal H_j(v^n-v^{(1)}, \mathbb{G}_2)\\
&\leq& C \dt^2 h  +  \frac{1}{4}\|\mathbb{G}_2\|^2+ \dt  \mathcal H (\mathbb{G}_1, \mathbb{G}_2)+ C \dt^2 h^{-1/2} \|\mathbb{G}_2\|\\
&\leq& C \dt^2 h  +  \frac{1}{4}\|\mathbb{G}_2\|^2+ \dt  \mathcal H (\mathbb{G}_1, \mathbb{G}_2) +C \dt^4 h^{-1} +  \frac{1}{4}\|\mathbb{G}_2\|^2\\
&\leq& C \dt^2 h  +  \frac{1}{2}\|\mathbb{G}_2\|^2+ \dt  \mathcal H (\mathbb{G}_1, \mathbb{G}_2).
\eeno

For the other term, similarly we get
\beno
&&\dt (2 \mathcal  L (\mathbb{G}_1) - \mathcal  K (\mathbb{G}_1) - \mathcal J (\mathbb{G}_1)  )\\
&=&  (3\eta^{n+1}-2 \eta^{(2)}-\eta^n+3\Upsilon-(4\eta^{(2)}-3\eta^n-\eta^{(1)})-(\eta^{(1)}-\eta^n)-L_a, \mathbb{G}_1)\\
&& + \dt  \mathcal H (2 e^{(2)}-e^{(1)}-e^n, \mathbb{G}_1)+\dt \sum_{j \in \cJ^n} \mathcal H_j(2(v^n-v^{(2)})-(v^n-v^{(1)}), \mathbb{G}_1)\\
&\leq& C \dt h + \dt  \mathcal H (\mathbb{G}_2, \mathbb{G}_1) + \frac{\epsilon}{2} \dt \|\mathbb{G}_1\|^2\\
&\leq& C \dt h + \dt  \mathcal H (\mathbb{G}_2, \mathbb{G}_1) + \epsilon \dt \|\xi^n\|^2 +\epsilon \dt \|\xi^{(1)}\|^2.
\eeno
Therefore,
\beno
&&(\mathbb{G}_2, \mathbb{G}_2)+3(\mathbb{G}_1, \mathbb{G}_3)\\
&&\leq-\frac{1}{2}\|\mathbb{G}_2\|^2 +   C \dt h+ \dt  \mathcal H (\mathbb{G}_2, \mathbb{G}_1)+\dt  \mathcal H (\mathbb{G}_1, \mathbb{G}_2)+ \epsilon \dt \|\xi^n\|^2 +\epsilon \dt \|\xi^{(1)}\|^2\\
&&\leq-\frac{1}{2}\|\mathbb{G}_2\|^2 +   C \dt h- \dt  \sum_j c [\mathbb{G}_1] [ \mathbb{G}_2]+ \epsilon \dt \|\xi^n\|^2 +\epsilon \dt \|\xi^{(1)}\|^2\\
&&\leq-\frac{1}{2}\|\mathbb{G}_2\|^2 +   C \dt h+ \frac{\dt}{4}  \sum_j c |[\mathbb{G}_1]|^2 +\dt  \sum_j c |[\mathbb{G}_2]|^2+ \epsilon \dt \|\xi^n\|^2 +\epsilon \dt \|\xi^{(1)}\|^2.
\eeno
Also 
\beno
&&3(\mathbb{G}_3, \mathbb{G}_2)\\
&=& \dt (   2\mathcal L ( \mathbb{G}_2) -\mathcal K ( \mathbb{G}_2)-\mathcal J (\mathbb{G}_2) )\\
&=& (3\eta^{n+1}-2 \eta^{(2)}-\eta^n+3\Upsilon-(4\eta^{(2)}-3\eta^n-\eta^{(1)})-(\eta^{(1)}-\eta^n)-L_a, \mathbb{G}_2 )\\
&& + \dt  \mathcal H (2 e^{(2)}-e^{(1)}-e^n, \mathbb{G}_2)+\dt \sum_{j \in \cJ^n} \mathcal H_j(2(v^n-v^{(2)})-(v^n-v^{(1)}), \mathbb{G}_2)\\
&\leq& C \dt h+ \dt  \mathcal H (\mathbb{G}_2, \mathbb{G}_2)+ \epsilon \dt \|\mathbb{G}_2\|^2\\
&\leq& C \dt h-\frac{\dt}{2} \sum_j c [\mathbb{G}_2]_{j+1/2}^2+ \epsilon \dt \|\mathbb{G}_2\|^2.
\eeno
and
\beno
&&3\|\mathbb{G}_3\|^2=3(\mathbb{G}_3, \mathbb{G}_3)\\
&=& \dt (   2\mathcal L ( \mathbb{G}_3) -\mathcal K ( \mathbb{G}_3)-\mathcal J (\mathbb{G}_3) )\\
&=& (3\eta^{n+1}-2 \eta^{(2)}-\eta^n+3\Upsilon-(4\eta^{(2)}-3\eta^n-\eta^{(1)})-(\eta^{(1)}-\eta^n)-L_a, \mathbb{G}_3)\\
&& + \dt  \mathcal H (2 e^{(2)}-e^{(1)}-e^n, \mathbb{G}_3)+\dt \sum_{j \in \cJ^n} \mathcal H_j(2(v^n-v^{(2)})-(v^n-v^{(1)}), \mathbb{G}_3)\\
&\leq& C \dt h^{1/2} \|\mathbb{G}_3\|+C \frac{\dt}{h} \|\mathbb{G}_2\|\cdot \|\mathbb{G}_3\|+ C  \dt^2 h^{-1/2} \|\mathbb{G}_3\|
\eeno
Therefore $$\|\mathbb{G}_3\|\leq C \dt h^{1/2}+C \|\mathbb{G}_2\|$$ due to the CFL condition and
$$3\|\mathbb{G}_3\|^2\leq C \dt^2 h+\frac{1}{4}\|\mathbb{G}_2\|^2.$$

Putting everything together, we have
\beno
\Pi_2
 &\leq& (-\frac{1}{4}+\epsilon \dt) \|\mathbb{G}_2\|^2+ C \dt h + \frac{\dt}{4} \sum_j c [\mathbb{G}_1]_{j+1/2}^2\\
 &&+\frac{\dt}{2} \sum_j c [\mathbb{G}_2]_{j+1/2}^2+ \epsilon \dt \|\xi^n\|^2 +\epsilon \dt \|\xi^{(1)}\|^2\\
 & \leq& (-\frac{1}{4}+\epsilon \dt) \|\mathbb{G}_2\|^2+ C \dt h + \frac{\dt}{2} \sum_j c( [\xi^n]_{j+1/2}^2+[\xi^{(1)}]_{j+1/2}^2)\\
 &&+C \frac{\dt}{h} \|\mathbb{G}_2\|^2+ \epsilon \dt \|\xi^n\|^2 +\epsilon \dt \|\xi^{(1)}\|^2\\
 & \leq& (-\frac{1}{4}+\epsilon \dt+C C_{cfl}) \|\mathbb{G}_2\|^2+C \dt h+ \frac{\dt}{2} \sum_j c( [\xi^n]_{j+1/2}^2+[\xi^{(1)}]_{j+1/2}^2)\\
 &&+ \epsilon \dt \|\xi^n\|^2 +\epsilon \dt \|\xi^{(1)}\|^2
 \eeno
When $\epsilon$ and $C_{cfl}$ are small enough, $-\frac{1}{4}+\epsilon \dt+C C_{cfl} \leq 0$, and
 $$\Pi_2 \leq C \dt h+ \frac{\dt}{2} \sum_j c( [\xi^n]_{j+1/2}^2+[\xi^{(1)}]_{j+1/2}^2)
+ \epsilon \dt \|\xi^n\|^2 +\epsilon \dt \|\xi^{(1)}\|^2.$$

Finally
 $$\Pi_1+\Pi_2 \leq  C\dt h+2\epsilon \dt \|\xi^n\|^2+2\epsilon \dt \|\xi^{(1)}\|^2+\epsilon \dt \|\xi^{(2)}\|^2.$$
 
At this point, we need to provide an estimate of $\|\xi^{(1)}\|$, $\|\xi^{(2)}\|$ to finish the proof. Plug in the error equation \eqref{eqn:erroreqn},
 \beno
 &&\|\xi^{(1)}\|^2=(\xi^{(1)},\xi^{(1)})=(\xi^n, \xi^{(1)})+ \dt \mathcal J(\xi^{(1)})\\
 && \leq \|\xi^{n}\|\cdot\|\xi^{(1)}\| +(\eta^{(1)}-\eta^n, \xi^{(1)}) + \dt \mathcal H (\xi^n-\eta^n, \xi^{(1)})\\
 && \leq \|\xi^{n}\|\cdot\|\xi^{(1)}\| + C \dt h^{1/2} \|\xi^{(1)}\|+ C\frac{\dt}{h}(\|\xi^n\|+\|\eta^n\|) \|\xi^{(1)}\|\\
  && \leq C\|\xi^{n}\|\cdot\|\xi^{(1)}\| + C \dt h^{1/2} \|\xi^{(1)}\|
 \eeno
 Therefore, $$ \|\xi^{(1)}\| \leq C\|\xi^n\|+ C \dt h^{1/2}.$$
 Similarly, $$ \|\xi^{(2)}\| \leq C\|\xi^n\|+C \|\xi^{(1)}\| + C \dt h^{1/2} \leq C\|\xi^n\|+ C \dt h^{1/2}.$$
 
 Overall,  $$\Pi_1+\Pi_2 \leq  C\dt h+C \dt \|\xi^n\|^2,$$ and
 \beno
3\|\xi^{n+1}\|^2-3\|\xi^n\|^2 \leq C\dt h+C \dt \|\xi^n\|^2
\eeno
i.e.
\be
\label{rkroot}
\|\xi^{n+1}\|^2 \leq (1+ C \dt) \|\xi^n\|^2 + C\dt h
\ee
and by induction with the initial condition satisfying $||\xi^0|| \leq C h^\qkell$,
$$
\|\xi^n\|  \leq C h^{1/2} 
$$
and we are done using the projection property $\|\eta^n\|   \leq C h^\qkell $, since $\qkell=\frac{3}{2}$ in this case.

The final bound is 
\beno
  \|u_h^{n} - u^{n}\|  \leq  C\, (h+\frac{h^3}{\dt^2})^{1/2}.
\eeno
In the case $h/\dt$ is bounded from below, we obtain a bound of order $h^{1/2}$.
\end{proof}

\subsection{Convergence of RKDG scheme in the obstacle case}
Now we turn to scheme \eqref{eq:DGscheme} for  the obstacle equation \eqref{eq:02}. 
In particular,
the scheme writes: initialize with 
$ u_h^0:=\Pi_h u_0$, and 
$u_h^{n+1} = \Pi_h (\max(G_{\dt}^{RK} (u_h^n) , \tilde g))$ for $n \geq 0$. The main idea follows closely the proof of Theorem \ref{th:2}, but utilizes the estimates in Theorem \ref{th:rkdg11}.

\begin{thm}\label{th:rkdg2}
Assume that the exact solution is not shattering in the sense of Definition~\ref{def:noshat}.
Under the same assumption as in Theorem \ref{th:rkdg11} (in particular 
the assumption on the CFL condition)
with both $\tilde g= g_\dt$ and $\tilde g=\max\big( g(\cdot), g(\cdot-c\dt) \big)$, 
scheme \eqref{eq:DGscheme} with RKDG solver $G_{\dt}^{RK}$ satisfies
\beno
  \|u_h^{n} - u^{n}\|  \leq  C\, (h+\frac{h^3}{\dt^2})^{1/2}
\eeno
for some constant $C\geq 0$ independent of $h, \dt, u_h$.

In particular, if $\dt/h$ is bounded from below ($\frac{\dt}{h} \geq 
\bar C_0$ for some constant $\bar C_0 >0$), we have
\beno
  \|v_h^{n} - v^{n}\|   \leq  C_1 h^{1/2}.
\eeno
\end{thm}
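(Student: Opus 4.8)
The strategy is to transfer the obstacle onto the transport step exactly as in the proof of Theorem~\ref{th:2}, and to replace the trivial transport contraction used there by the one‑step energy estimate that drives Theorem~\ref{th:rkdg11}. First I would observe, using the triangle inequality, $\|\Pi_h w\|\le\|w\|$, the elementary bound $|\max(a_1,b_1)-\max(a_2,b_2)|\le|a_1-a_2|+|b_1-b_2|$, the dynamic programming identity \eqref{eq:vn1exact} $u^{n+1}=\max\big(u^n(\cdot-c\dt),g_\dt\big)$, and Lemma~\ref{lem:bound1} together with the no‑shattering hypothesis (so that $\|\Pi_h u^{n+1}-u^{n+1}\|\le Ch^{\qkell}=Ch^{3/2}$, since $k\ge1$ and $\ell\ge2$), that
\[
\|u_h^{n+1}-u^{n+1}\|\ \le\ \big\|G^{RK}_\dt(u_h^n)-u^n(\cdot-c\dt)\big\| + \|\tilde g-g_\dt\| + Ch^{3/2}.
\]
Here $\|\tilde g-g_\dt\|=0$ when $\tilde g=g_\dt$, and $\|\tilde g-g_\dt\|\le C\dt^{3/2}\le Ch^{3/2}$ otherwise by Lemma~\ref{lem:gapprox} and the CFL condition $\dt\le C_0h$; in either case this term is harmless.

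The crux is to control the transport term $\|G^{RK}_\dt(u_h^n)-u^n(\cdot-c\dt)\|$. Because $u$ is not shattering and $\ell\ge2$, we have $u^n\in\mathcal C^{\ell+1}_{p,L,c_0}$ with constants independent of $n$, and $u^n(\cdot-c\dt)$ is precisely the value at $t_{n+1}$ of the exact solution of $v_t+cv_x=0$ started from $u^n$. Thus the one‑step analysis in the proof of Theorem~\ref{th:rkdg11} applies literally, with the ``exact solution'' taken to be $u^n$ and the ``numerical data'' taken to be $u_h^n$: the intermediate exact stages $v^{(1)},v^{(2)},v^{(3)}$, the bad set $\cB^n$ (of measure $\le 3ph$ uniformly in $n$), the correction $L_a$ and the truncation $\Upsilon=u^n(\cdot-c\dt)-v^{(3)}$ are all well defined, and the energy estimate \eqref{rkroot} gives
\[
\|\xi^{n+1}\|^2\ \le\ (1+C\dt)\,\|\xi^n\|^2 + C\dt\,h ,
\]
where $\xi^n:=\pro u^n-u_h^n$ and $\xi^{n+1}:=\pro\big(u^n(\cdot-c\dt)\big)-G^{RK}_\dt(u_h^n)$. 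Since $\|\pro\varphi-\varphi\|\le Ch^{3/2}$ for the relevant functions (Lemma~\ref{lem:bound1}), this yields $\big\|G^{RK}_\dt(u_h^n)-u^n(\cdot-c\dt)\big\|\le\|\xi^{n+1}\|+Ch^{3/2}$ and $\|\xi^n\|\le\|u_h^n-u^n\|+Ch^{3/2}$.

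It then remains to close the recursion on $\mathcal E^n:=\|u_h^n-u^n\|$. Combining the three displays above and using Young's inequality $(a+b)^2\le(1+\dt)a^2+(1+\dt^{-1})b^2$ repeatedly to pass between $\mathcal E^n$ and $\|\xi^n\|$, one arrives at
\[
(\mathcal E^{n+1})^2\ \le\ (1+C\dt)\,(\mathcal E^n)^2 + C\dt\,h + C\,\frac{h^3}{\dt},
\]
with $(\mathcal E^0)^2=\|\Pi_h u_0-u_0\|^2\le Ch^3$ (note $u_0=u(0,\cdot)\in\mathcal C^{\ell+1}_{p,L,c_0}$). A discrete Gronwall argument over $n\le T/\dt$ steps then gives
\[
(\mathcal E^n)^2\ \le\ e^{CT}\Big(Ch^3 + Ch + C\,\frac{h^3}{\dt^2}\Big)\ \le\ C\Big(h + \frac{h^3}{\dt^2}\Big),
\]
which is the asserted bound; when moreover $\dt/h\ge\bar C_0$ the term $h^3/\dt^2\le h/\bar C_0^2$ is absorbed and we obtain $\mathcal E^n\le C_1 h^{1/2}$.

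The point requiring most care is the claim that the one‑step estimate \eqref{rkroot} in the proof of Theorem~\ref{th:rkdg11} holds for \emph{arbitrary} numerical data $u_h^n\in V_h$, with constants depending on the exact solution only through $L$, $c_0$ and the (uniformly bounded) number $p$ of irregular points of $u^n$, and not through any assumed closeness of $u_h^n$ to $u^n$. Granting this, the scheme restarts cleanly at each time level, and the only price paid in iterating through the nonlinear obstacle step is the per‑step projection mismatch $\|\pro u^{n+1}-\Pi_h u^{n+1}\|=O(h^{3/2})$; squaring and accumulating this over $O(T/\dt)$ steps is exactly what produces the $h^3/\dt^2$ contribution in the final bound, and explains why it cannot be removed by the present argument.
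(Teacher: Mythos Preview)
Your proposal is correct and follows essentially the same route as the paper: reduce the obstacle step to the transport step exactly as in Theorem~\ref{th:2}, invoke the one-step energy estimate \eqref{rkroot} from the proof of Theorem~\ref{th:rkdg11} (with $\xi'=\pro u^n(\cdot-c\dt)-G^{RK}_\dt(u_h^n)$ playing the role of your $\xi^{n+1}$), and close with Young's inequality $(\epsilon=C\dt)$ and discrete Gronwall to obtain $\|e^{n+1}\|^2\le(1+C\dt)\|e^n\|^2+C\dt h+Ch^3/\dt$. One small slip: Lemma~\ref{lem:gapprox} actually gives $\|\tilde g-g_\dt\|\le C\dt^{5/2}$, not $\dt^{3/2}$, but under the CFL condition this is still $\le Ch^{3/2}$ and the argument goes through unchanged.
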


\begin{proof}
Similar to the proof of Theorem \ref{th:2}, we can obtain
\beno
 && \|e^{n+1}\|=\|u_h^{n+1} - u^{n+1}\|
    \leq  \| G_{\dt}^{RK} (u_h^n) - u^n(\cdot - c\dt)\| + \| \tilde g -  g_\dt \| +  C h^\qkell.
\eeno
We decompose the error $$  u_h^{n} - u^n =\eta^n-\xi^n,$$
where $\xi^n=\pro u^{n}- u_h^{n}, \quad \eta^n=\pro u^{n}-u^{n},$ and
$$G_{\dt}^{RK} (u_h^n) - u^n(\cdot - c\dt) =\xi'-\eta'$$
where $\xi'=\pro u^n(\cdot - c\dt)-G_{\dt}^{RK} (u_h^n), \quad \eta'=\pro u^n(\cdot - c\dt)-u^n(\cdot - c\dt).$

Therefore
\be
 \|e^{n+1}\|
    &\leq&  \| \xi'\| + \| \eta'\| + \| \tilde g -  g_\dt \| +  C h^\qkell \nonumber \\
    &\leq&  \| \xi'\|   + \| \tilde g -  g_\dt \| +  C h^\qkell \label{eq:A}
\ee
by using the projection property again. Hence
\be
 \|e^{n+1}\|^2
   & \leq & (1+\epsilon) \| \xi'\|^2 + (1+\frac{1}{\epsilon}) \left( \| \tilde g -  g_\dt \| +  C h^\qkell \right)^2
 \label{eq:B}
\ee

Using \eqref{rkroot} in the proof of Theorem \ref{th:rkdg11}, we obtain
\beno
  \| \xi'\|^2 \leq (1+C \dt) \|\xi^n\|^2+ C \dt h
\eeno
Hence
\beno
  \|e^{n+1}\|^2 & \leq &
    (1+\epsilon) \left((1+C \dt) \|\xi^n\|^2+ C \dt h \right)+ (1+\frac{1}{\epsilon}) 
    \left( \| \tilde g -  g_\dt \| +  C h^\qkell \right)^2
\eeno
Now we take  $\epsilon =C \dt$, 
\beno
  \|e^{n+1}\|^2
   & \leq&   (1+C \dt) \|\xi^n\|^2+ C \dt h +C \dt^{-1} \left( \| \tilde g -  g_\dt \| +  C h^\qkell \right)^2\\
      & \leq&   (1+C \dt) \|\xi^n\|^2+ C \dt h +C  \dt^{-1} h^3 \\
   &\leq& (1+\epsilon) (1+C \dt) \|e^n\|^2+(1+\frac{1}{\epsilon}) (1+C \dt) \|\eta^n\|^2+ C \dt h+C  \dt^{-1} h^3 \\
   &\leq&  (1+C \dt) \|e^n\|^2  + C \dt h+C  \dt^{-1} h^3
\eeno
where in the last line we have taken $\epsilon=C \dt$ again. By induction on $n$,
 we are done.
\end{proof}

\subsection{Convergence of the RKDG scheme defined with Gauss-Legendre quadrature points}
Now we turn to scheme \eqref{eq:DGscheme2} for  the nonlinear equation \eqref{eq:02}. 
In particular,
the scheme writes: initialize with 
$ u_h^0:=\Pi_h u_0$, and 
 $u_h^{n+1}$ is  defined as the unique polynomial in $V_h$ such that :
\be\label{eq:rkdg_obs}
   u^{n+1}_h(\xja):= \max(G_{\dt}^{RK}  (u_h^{n})(x^j_\ma),\ \tilde g(x^j_\ma)), \quad \forall j, \,\ma.
\ee

\begin{thm}\label{th:rkdg3}
Assume that the exact solution is not shattering in the sense of Definition~\ref{def:noshat}.
Under the same assumption as in Theorem \ref{th:rkdg11} (in particular assuming the CFL condition) 
with both $\tilde g= g_\dt$ and $\tilde g(x):=\max\big( g(x), g(x-c\dt) \big)$,   scheme \eqref{eq:DGscheme2} with RKDG solver  $G_{\dt}^{RK}$ satisfies
\beno
  \|u_h^{n} - u^{n}\|  \leq  C\, (h + \frac{h^3}{\dt^2})^{1/2}
\eeno
for some constant $C\geq 0$ independent of $h, \dt, u_h$.

In particular, if $\dt/h$ is bounded from below ($\frac{\dt}{h} \geq 
\bar C_0$ for some constant $\bar C_0 >0$), we have
\beno
  \|v_h^{n} - v^{n}\|   \leq  C_1 h^{1/2}.
\eeno
\end{thm}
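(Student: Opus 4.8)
The plan is to combine the pointwise/dynamic-programming argument used for the Gauss--Legendre scheme in Theorem~\ref{th:3} with the one-step RKDG energy estimate from the proof of Theorem~\ref{th:rkdg11}, exactly in the spirit in which Theorem~\ref{th:rkdg2} upgraded Theorem~\ref{th:2}. First I would invoke the ``no shattering'' assumption (Definition~\ref{def:noshat}) to ensure $u^n\in\mathcal C^{\ell+1}_{p,L,c_0}(0,1)$ for all $n$ with $n\dt\le T$, hence also $u^n(\cdot-c\dt)\in\mathcal C^{\ell+1}_{p,L,c_0}(0,1)$, so that Lemmas~\ref{lem:bound1} and~\ref{lem:bound3} are available. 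Since $u_h^{n+1}\in V_h$, the identity $\|f\|_{\ell^2}=\|f\|$ valid on $V_h$, together with Lemma~\ref{lem:bound3} applied to $u^{n+1}$, gives
$$
  \|u_h^{n+1}-u^{n+1}\| \;\le\; \|u_h^{n+1}-\Pi_h u^{n+1}\|_{\ell^2} + Ch^{\qkell} \;\le\; \|u_h^{n+1}-u^{n+1}\|_{\ell^2} + Ch^{\qkell}.
$$

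Next I would use the DPP: at every Gauss--Legendre node $\xja$ the exact solution obeys $u^{n+1}(\xja)=\max\big(u^n(\xja-c\dt),\,g_\dt(\xja)\big)$, while by definition \eqref{eq:rkdg_obs}, $u_h^{n+1}(\xja)=\max\big(G_{\dt}^{RK}(u_h^n)(\xja),\,\tilde g(\xja)\big)$. The elementary bound $|\max(a_1,b_1)-\max(a_2,b_2)|\le|a_1-a_2|+|b_1-b_2|$ and summation against the quadrature weights then yield
$$
  \|u_h^{n+1}-u^{n+1}\|_{\ell^2} \;\le\; \|G_{\dt}^{RK}(u_h^n)-u^n(\cdot-c\dt)\|_{\ell^2} + \|\tilde g-g_\dt\|_{\ell^2}.
$$
Because $G_{\dt}^{RK}(u_h^n)\in V_h$ but $u^n(\cdot-c\dt)\notin V_h$, I would interpose the Legendre--Gauss--Radau projection: setting $\xi':=\pro u^n(\cdot-c\dt)-G_{\dt}^{RK}(u_h^n)\in V_h$ and $\eta':=\pro u^n(\cdot-c\dt)-u^n(\cdot-c\dt)$, the triangle inequality, $\|\cdot\|_{\ell^2}=\|\cdot\|$ on $V_h$, and Lemma~\ref{lem:bound3} for $\eta'$ give $\|G_{\dt}^{RK}(u_h^n)-u^n(\cdot-c\dt)\|_{\ell^2}\le\|\xi'\|+Ch^{\qkell}$. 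The essential input is then the one-step energy estimate \eqref{rkroot} from the proof of Theorem~\ref{th:rkdg11}, applied with $u^n$ in the role of the exact transported solution (legitimate since $\ell\ge2$, $k\ge1$, and $u^n\in\mathcal C^{\ell+1}_{p,L,c_0}$): it yields $\|\xi'\|^2\le(1+C\dt)\|\xi^n\|^2+C\dt h$, where $\xi^n=\pro u^n-u_h^n$.

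Assembling the three displays, squaring with Young's inequality with parameter $\epsilon=C\dt$, and passing from $\xi^n$ back to $e^n=u^n-u_h^n$ through $\eta^n=\pro u^n-u^n$ (with $\|\eta^n\|\le Ch^{\qkell}$), I obtain a recursion
$$
  \|e^{n+1}\|^2 \;\le\; (1+C\dt)\|e^n\|^2 + C\dt h + C\dt^{-1}h^{2\qkell} + C\dt^{-1}\|\tilde g-g_\dt\|_{\ell^2}^2 .
$$
For $\tilde g=g_\dt$ the last term vanishes; for $\tilde g=\max\big(g(\cdot),g(\cdot-c\dt)\big)$ with $g\in\Delta^2_q(0,1)$, Lemma~\ref{lem:gapprox} gives $\|\tilde g-g_\dt\|_{\ell^2}^2\le C(\dt+h)\dt^4$, which under the CFL condition $\dt\le C_0 h$ is $O(h^3)$ and is thus absorbed by the other terms. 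Since $\qkell=3/2$ here, $\dt^{-1}h^{2\qkell}=h^3/\dt$; together with $\|e^0\|\le Ch^{3/2}$, a discrete Gr\"onwall argument over $n\dt\le T$ gives $\|e^n\|^2\le C\big(h+h^3/\dt^2\big)$, i.e.\ the claimed $O\big((h+h^3/\dt^2)^{1/2}\big)$ bound, which reduces to $O(h^{1/2})$ once $\dt/h\ge\bar C_0$.

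The main obstacle is bookkeeping rather than conceptual: one must carefully track which quantities lie in $V_h$ (where $\ell^2$ and $L^2$ coincide and where \eqref{rkroot} applies) and which do not --- namely the shifted exact solution $u^n(\cdot-c\dt)$ and the obstacle data $g_\dt,\tilde g$ --- inserting the projection $\pro$ at each interface and controlling every residual by Lemma~\ref{lem:bound3}. The one genuinely delicate point is verifying that the energy recursion \eqref{rkroot} may be invoked with $u^n$ in place of the exact transported solution, which is precisely why the full hypotheses of Theorem~\ref{th:rkdg11} (no shattering, $\ell\ge2$, $k\ge1$, and the CFL condition) are imported in the statement.
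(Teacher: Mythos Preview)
Your proposal is correct and follows essentially the same approach as the paper's own proof: the $\ell^2$--$L^2$ passage via the Gauss--Legendre nodes (as in Theorem~\ref{th:3}), the pointwise DPP bound at the nodes, and then the one-step RKDG energy recursion \eqref{rkroot} combined with Young's inequality with parameter $\epsilon=C\dt$ to obtain the Gr\"onwall recursion. The only cosmetic difference is that the paper first interposes $\Pi_h u^n(\cdot-c\dt)$ to convert the $\ell^2$ norm to an $L^2$ norm and then invokes the decomposition $\xi',\eta'$ with $\pro$ (as in Theorem~\ref{th:rkdg2}), whereas you interpose $\pro$ directly in the $\ell^2$ norm using Lemma~\ref{lem:bound3}; both routes lead to the same recursion.
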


\begin{proof}
Following the same lines as the proof for Theorem \ref{th:3}, we obtain
\beno
    \|e^{n+1}\|=\| u_h^{n+1} - u^{n+1} \| 
  & \leq & 
 \|G_{\dt}^{RK}  (u_h^{n})-\Pi_h u^n(\cdot - c \dt)\|_{\ell^2} +\|\tilde g -g_\dt\|_{\ell^2} + C  h^\qkell \\
  & =& 
 \|G_{\dt}^{RK} (u_h^{n})-\Pi_h u^n(\cdot - c \dt)\| +\|\tilde g -g_\dt\|_{\ell^2} + C  h^\qkell   \\
 & \leq & 
 \|G_{\dt}^{RK} (u_h^{n})- u^n(\cdot - c \dt)\| +\|\tilde g -g_\dt\|_{\ell^2} + C  h^\qkell.
\eeno
By the same argument as in Theorem \ref{th:rkdg2}
\beno
  \|e^{n+1}\|^2
   & \leq&   (1+C \dt) \|\xi^n\|^2+ C \dt h +C \dt^{-1} \left( \| \tilde g -  g_\dt \|_{\ell^2} +  C h^\qkell \right)^2\\
      & \leq&   (1+C \dt) \|\xi^n\|^2+ C \dt h +C  \dt^{-1} h^3 \\
   &\leq&  (1+C \dt) \|e^n\|^2  + C \dt h+C  \dt^{-1} h^3,
\eeno
and we are done.
\end{proof}



\section{Numerical results}\label{sec:num}

In this section we consider  one- and a two-dimensional examples to validate our results.
In the one dimensional setting the SLDG scheme and the RKDG scheme are tested
and in the two dimensional setting only the RKDG scheme is tested.


\smallskip

\paragraph{\bf Example~1 (1--d)}
This is a one-dimensional test (same as \cite[Example 1]{bok-che-shu-14}):
\be  & &  \min(u_t +  u_x , u - g(x)) =0,\quad  t>0,\ x\in [-1,1],\\
     & &  u(0,x) = u_0(x), \quad x\in [-1,1],
\ee
with periodic boundary conditions and $g(x):=\sin(\pi x)$, $u_0(x):=0.5+\sin(\pi x)$.
In that case, for times $0\leq t\leq 1$, the exact solution is given by :
\newcommand{\tzero}{\frac{1}{3}}
$$
 u^{(1)}(t,x)= \left\{ \barr{ll}
   \max( u_0(x-t),\ g(x)) & \mbox{ if } t<\tzero,\\
   \max( u_0(x-t),\ g(x),\ 1_{x\in[0.5,1]} ) & \mbox{ if } t\in [\tzero,\tzero+\ud], \\
   \max( u_0(x-t),\ g(x),\ 1_{x\in[-1,t-\frac{1}{3}-\frac{1}{2}]\cup [0.5,1]} )
     & \mbox{ if } t\in [\tzero+\ud,1].\\
  \earr \right.
$$

We first consider the RKDG scheme~\eqref{eq:DGscheme2}. 
The choice $\dt=0.2\, h$ is made considering the
stability of the RKDG scheme with $P^2$ elements.

In Figure~\ref{fig:1d-1} the numerical solution is shown, which
agrees well with the exact solution everywhere.
Table~\ref{tab:1d-1} contains the numerical errors at time $t=0.5$.
The errors are computed globally (a uniform grid mesh of $10^4$ points is used in each mesh cell to estimate the errors).
In this example there are three singular points ($s_1\simeq-0.1349$, $s_2:=0.5$ and $s_3=2/3$) where the solution is continuous
but with different left and right derivatives. We use a least square procedure to calculate the approximate order of the scheme, see Figure \ref{fig:1d-error1}. From the calculation, the orders for $L^1, L^2, L^\infty$ errors are $1.75, 1.35$ and $0.97$, respectively.

Next, in Table~\ref{tab:1d-2} and in Table~\ref{tab:1d-3}, numerical errors for the SLDG scheme are given at time  $t=0.5$.
Table~\ref{tab:1d-2} shows the results for the choice of time step $\dt=h/2$.
Then, in Table~\ref{tab:1d-3} the choice of $\dt$ of the order of $h^{3/5}$ is made,
as suggested in the theoretical study (number of time steps is $10 (N_x/10)^{3/5}$) and since there is no CFL restriction on this scheme. The $L^1, L^2, L^\infty$ order of the methods based on the least square plots Figure \ref{fig:1d-error2} are $1.66, 1.21, 0.72$ (when $\dt=h/2$) and $1.48, 1.37, 0.95$ (when $\dt=C\,h^{3/5}$).

We can clearly observe from this example that the numerical orders
of convergence 
in the $L^\infty$ norm are at least as good as and often better
than those obtained by our analysis.


\begin{figure}[!hbtp]
\begin{center}
\includegraphics[width=0.32\textwidth]{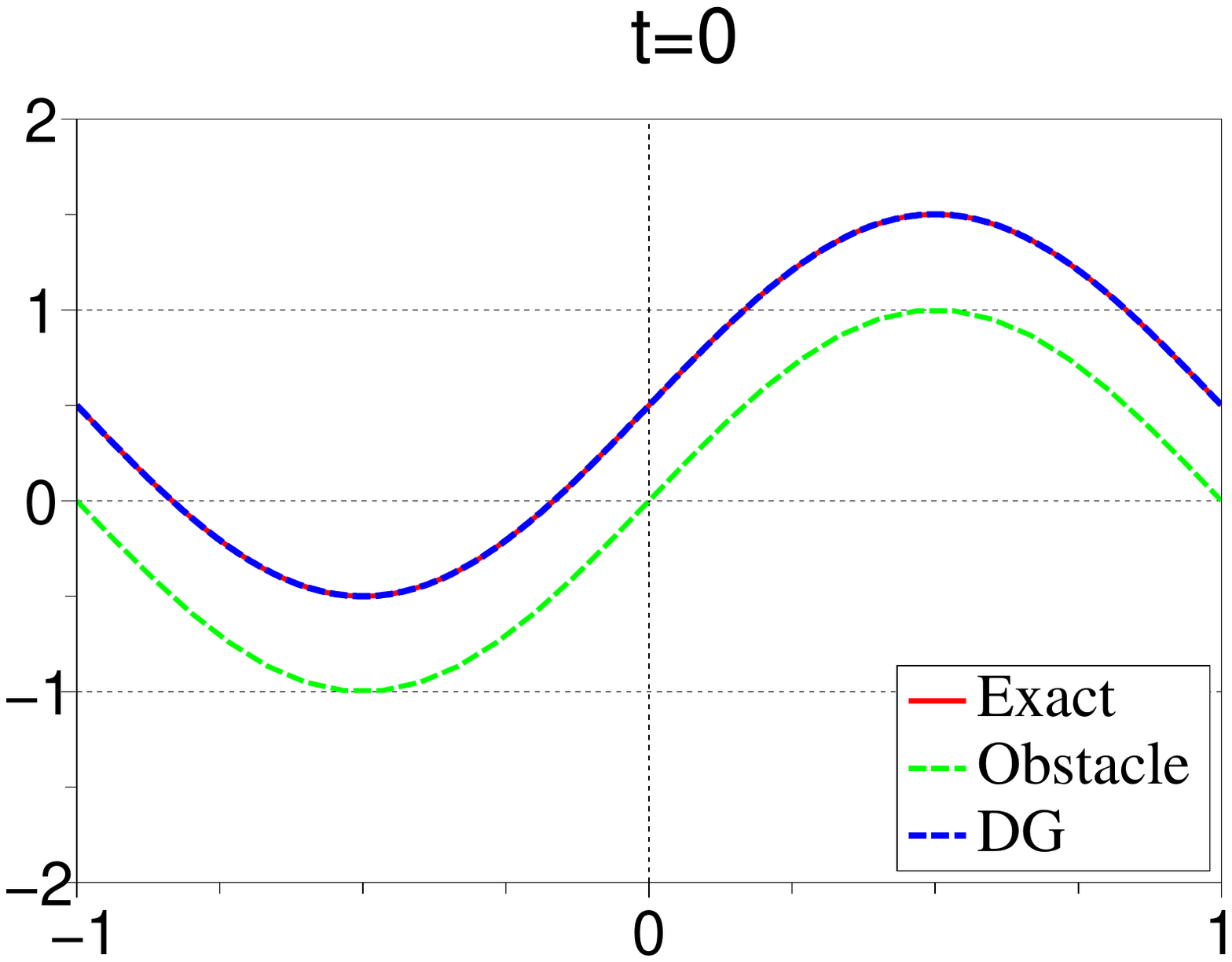}
\includegraphics[width=0.32\textwidth]{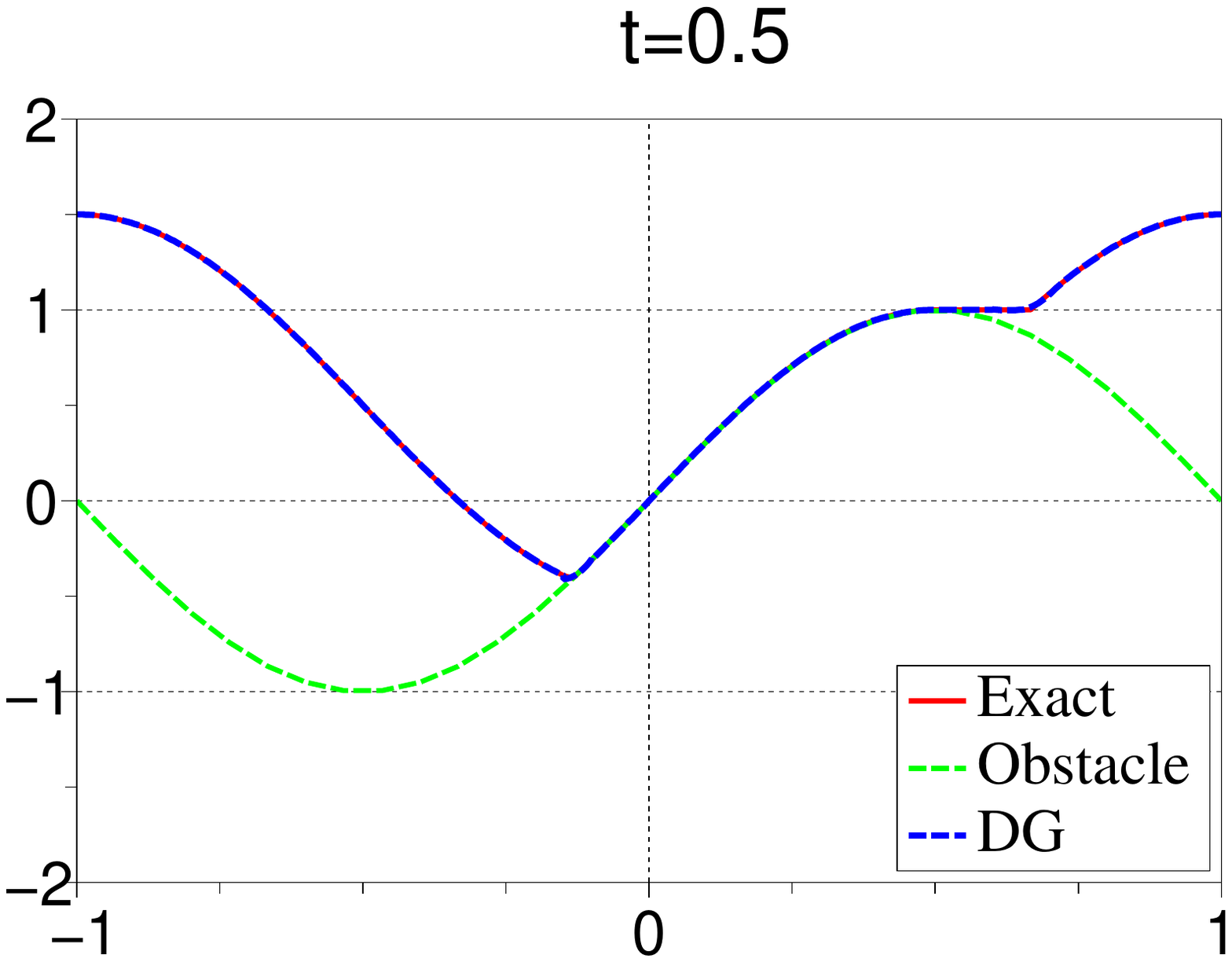}
\includegraphics[width=0.32\textwidth]{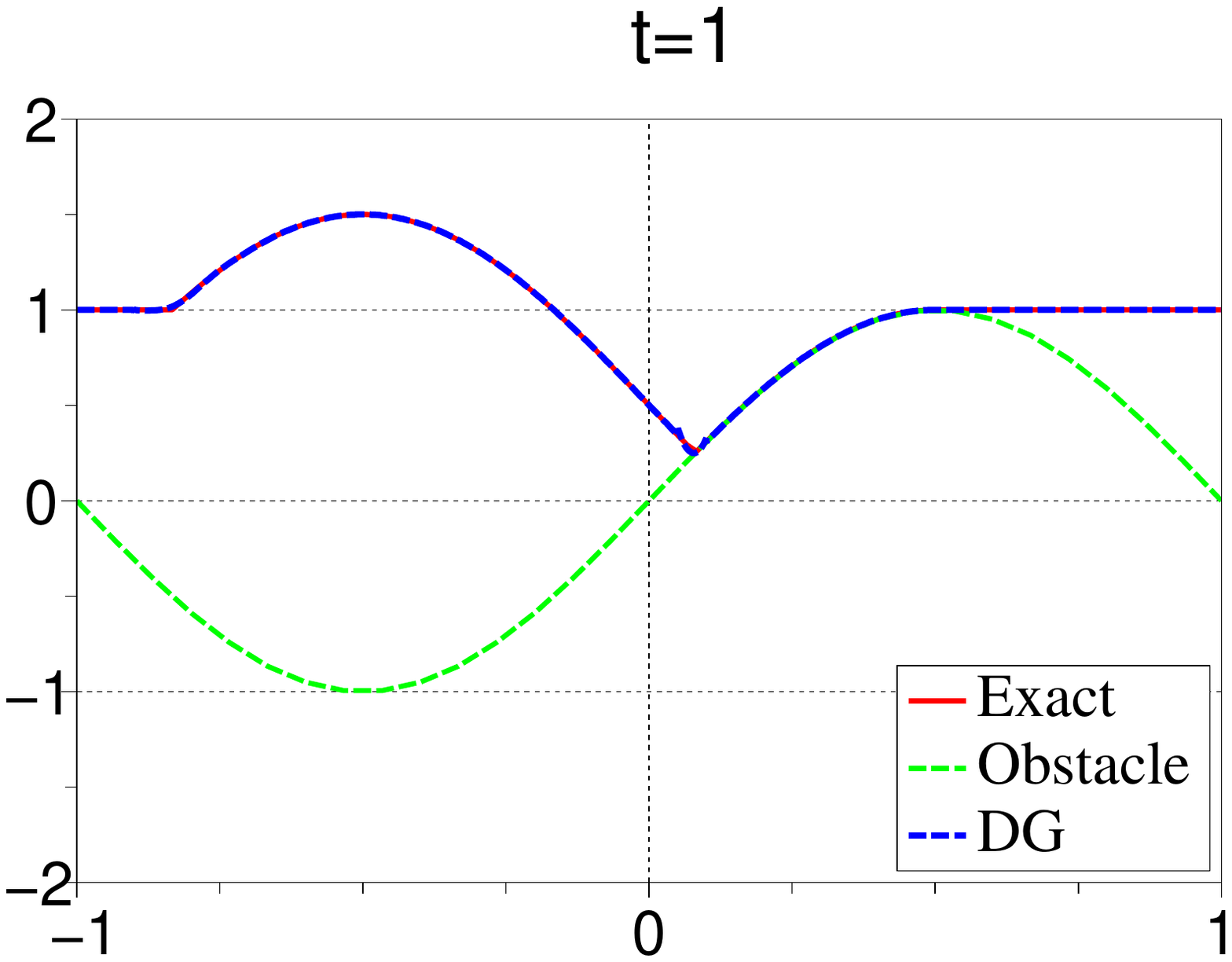}
\vspace*{-2cm}
\caption{\label{fig:1d-1} Example~1, RKDG scheme, times~$t=0$, $t=0.5$ and~$t=1$, using $P^2$ elements with $N_x=20$ mesh cells
(obstacle : green dotted line).}
\end{center}
\end{figure}

\begin{table}[!hbtp]
\caption{\label{tab:1d-1} Example~1, RKDG scheme with $P^2$ elements.}
\begin{tabular}{r|cc|cc|cc}
$N_x$ & $L^1$-error & order & $L^2$-error &order & $L^\infty$-error& order\\[.1cm] \hline\hline
   80  & 2.68E-04 &  1.52  & 1.01E-03 &  1.07  & 1.25E-02 &  0.27  \\ 
  160  & 6.47E-05 &  2.05  & 3.46E-04 &  1.55  & 6.80E-03 &  0.87  \\ 
  320  & 1.96E-05 &  1.72  & 1.30E-04 &  1.41  & 2.52E-03 &  1.43  \\ 
  640  & 6.40E-06 &  1.62  & 5.65E-05 &  1.21  & 1.43E-03 &  0.82  \\ 
 1280  & 2.10E-06 &  1.61  & 2.54E-05 &  1.15  & 8.26E-04 &  0.79  \\ 
 2560  & 6.14E-07 &  1.77  & 1.02E-05 &  1.32  & 4.76E-04 &  0.79  \\ 
 5120  & 1.98E-07 &  1.63  & 4.35E-06 &  1.22  & 2.75E-04 &  0.79  \\ 
10240  & 6.19E-08 &  1.68  & 1.88E-06 &  1.21  & 1.61E-04 &  0.78  \\ 
\end{tabular}
\end{table}

\begin{figure}[!hbtp]
\begin{center}
\includegraphics[width=0.8\textwidth]{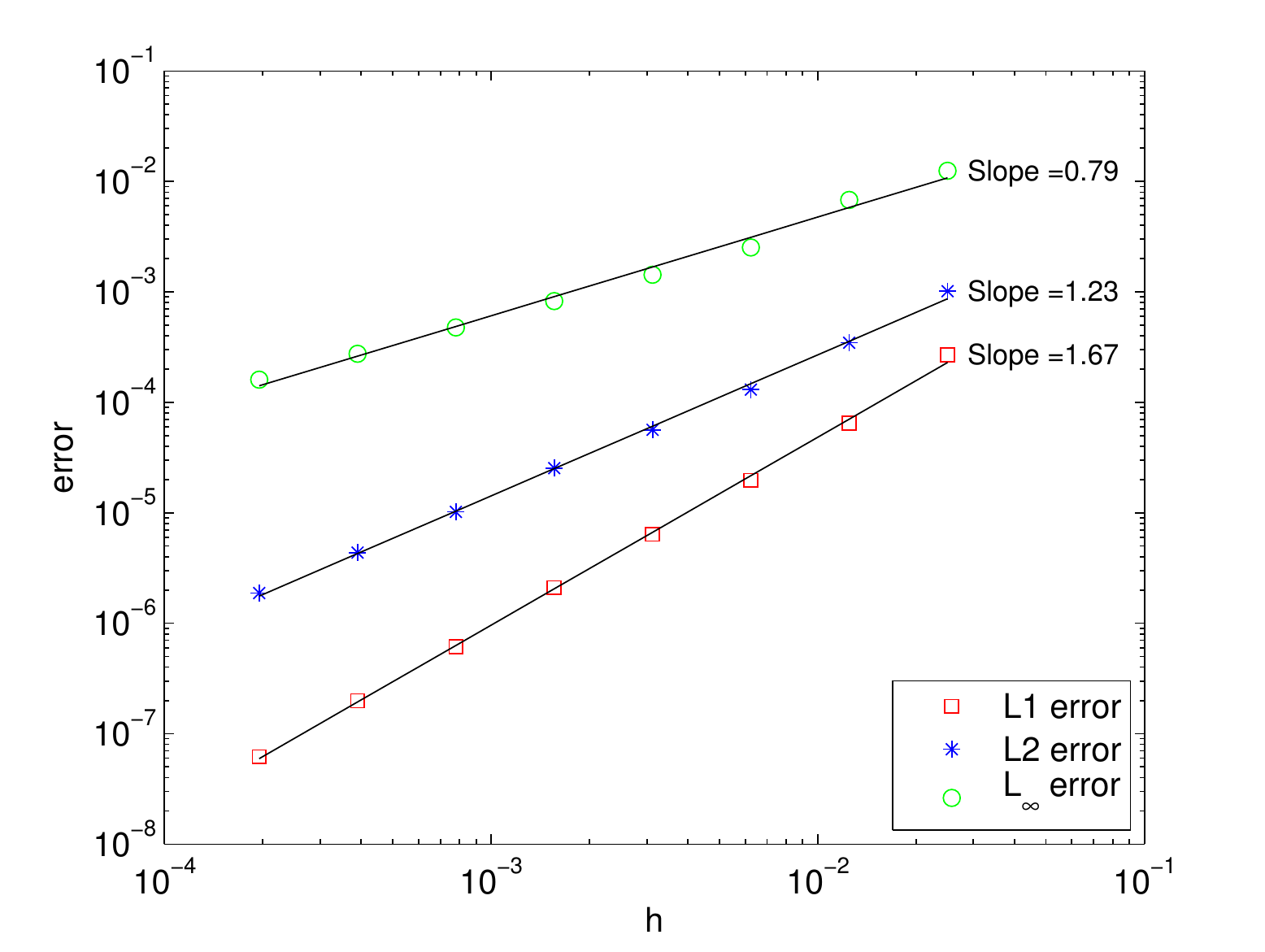}
\caption{\label{fig:1d-error1} Example~1, RKDG scheme with $P^2$ elements. }
\end{center}
\end{figure}



\begin{table}[!hbtp]
\caption{\label{tab:1d-2} Example~1, SLDG scheme, with $P^2$ elements and $\dt=h/2$.}
\begin{tabular}{r|cc|cc|cc}
$N_x$ & $L^1$-error & order & $L^2$-error &order & $L^\infty$-error& order\\[.1cm] \hline\hline
    80 &  1.73E-04 &    -   & 6.21E-04 &   -    & 3.11E-03 &   -    \\ 
   160 &  2.38E-05 &  2.86  & 1.21E-04 &  2.35  & 8.58E-04 &  1.86  \\ 
   320 &  1.56E-05 &  0.61  & 9.77E-05 &  0.31  & 8.26E-04 &  0.05  \\ 
   640 &  4.73E-06 &  1.72  & 3.54E-05 &  1.47  & 4.30E-04 &  0.94  \\ 
  1280 &  1.73E-06 &  1.45  & 1.94E-05 &  0.87  & 3.25E-04 &  0.41  \\ 
  2560 &  4.11E-07 &  2.07  & 6.79E-06 &  1.52  & 1.76E-04 &  0.88  \\ 
  5120 &  1.31E-07 &  1.65  & 3.03E-06 &  1.16  & 1.19E-04 &  0.57  \\ 
 10240 &  4.03E-08 &  1.70  & 1.22E-06 &  1.32  & 6.67E-05 &  0.83  \\ 
\end{tabular}
\end{table}

\begin{table}[!hbtp]
\caption{\label{tab:1d-3} Example~1, SLDG scheme, with $P^2$ elements and $\dt=C\,h^{3/5}$.}
\begin{tabular}{rr|cc|cc|cc}
$N_x$ & $N$ &  $L^1$-error & order & $L^2$-error &order & $L^\infty$-error& order\\[.1cm] \hline\hline
    80 &    34  & 6.04E-05 &  1.36  & 1.79E-04 &  1.20  & 8.26E-04 &  0.94  \\ 
   160 &    52  & 1.99E-05 &  1.60  & 9.34E-05 &  0.94  & 7.58E-04 &  0.12  \\ 
   320 &    79  & 7.95E-06 &  1.33  & 2.66E-05 &  1.81  & 2.70E-04 &  1.49  \\ 
   640 &   121  & 2.50E-06 &  1.67  & 1.43E-05 &  0.90  & 1.86E-04 &  0.54  \\ 
  1280 &   183  & 9.80E-07 &  1.35  & 7.45E-06 &  0.94  & 1.46E-04 &  0.34  \\ 
  2560 &   278  & 3.66E-07 &  1.42  & 2.93E-06 &  1.35  & 8.03E-05 &  0.86  \\ 
  5120 &   422  & 1.22E-07 &  1.58  & 7.41E-07 &  1.98  & 2.74E-05 &  1.55  \\ 
 10240 &   639  & 4.48E-08 &  1.45  & 1.83E-07 &  2.02  & 5.82E-06 &  2.23  
\end{tabular}
\end{table}

\begin{figure}[!hbtp]
\begin{center}
\includegraphics[width=0.49\textwidth]{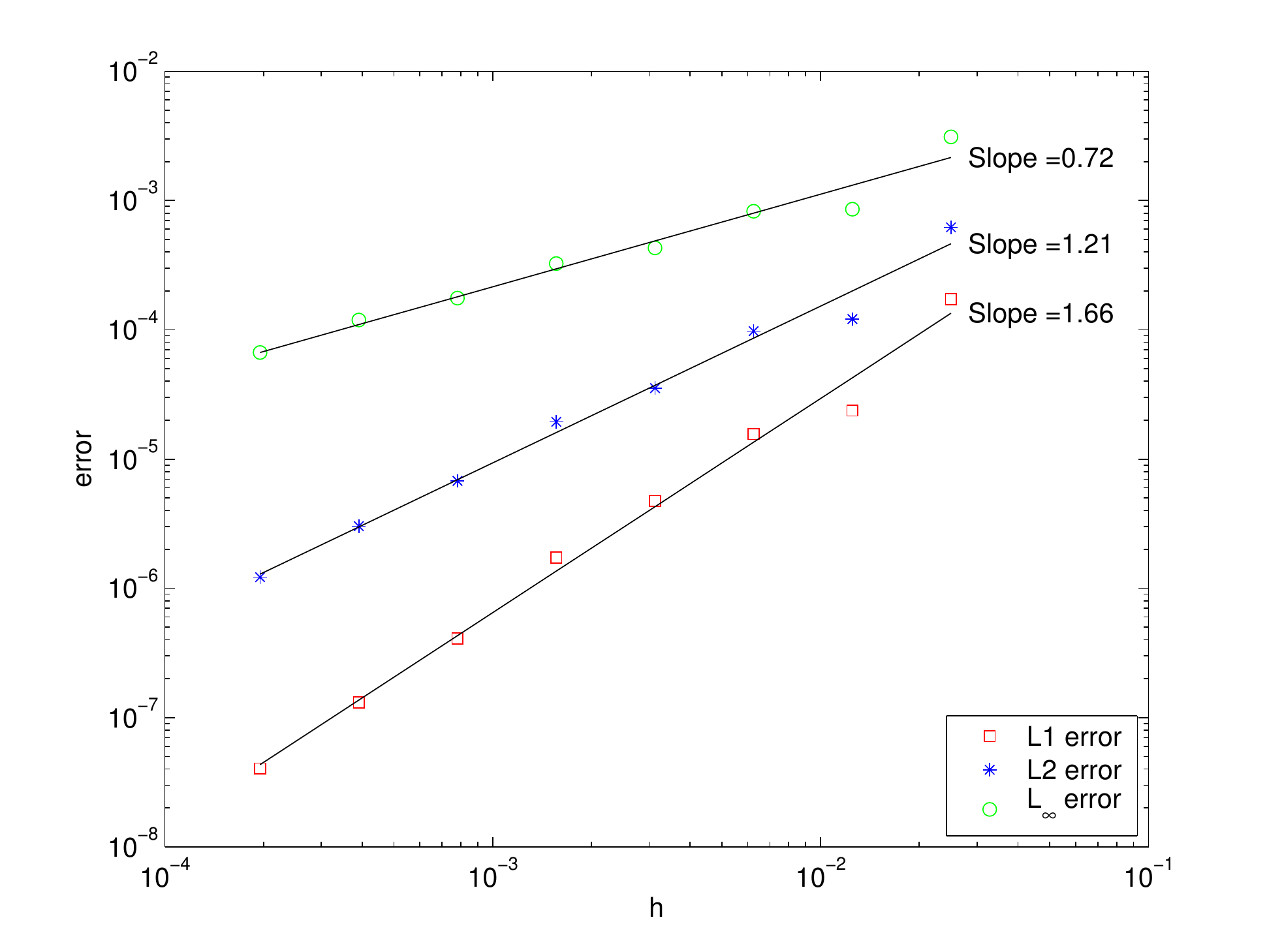}
\includegraphics[width=0.49\textwidth]{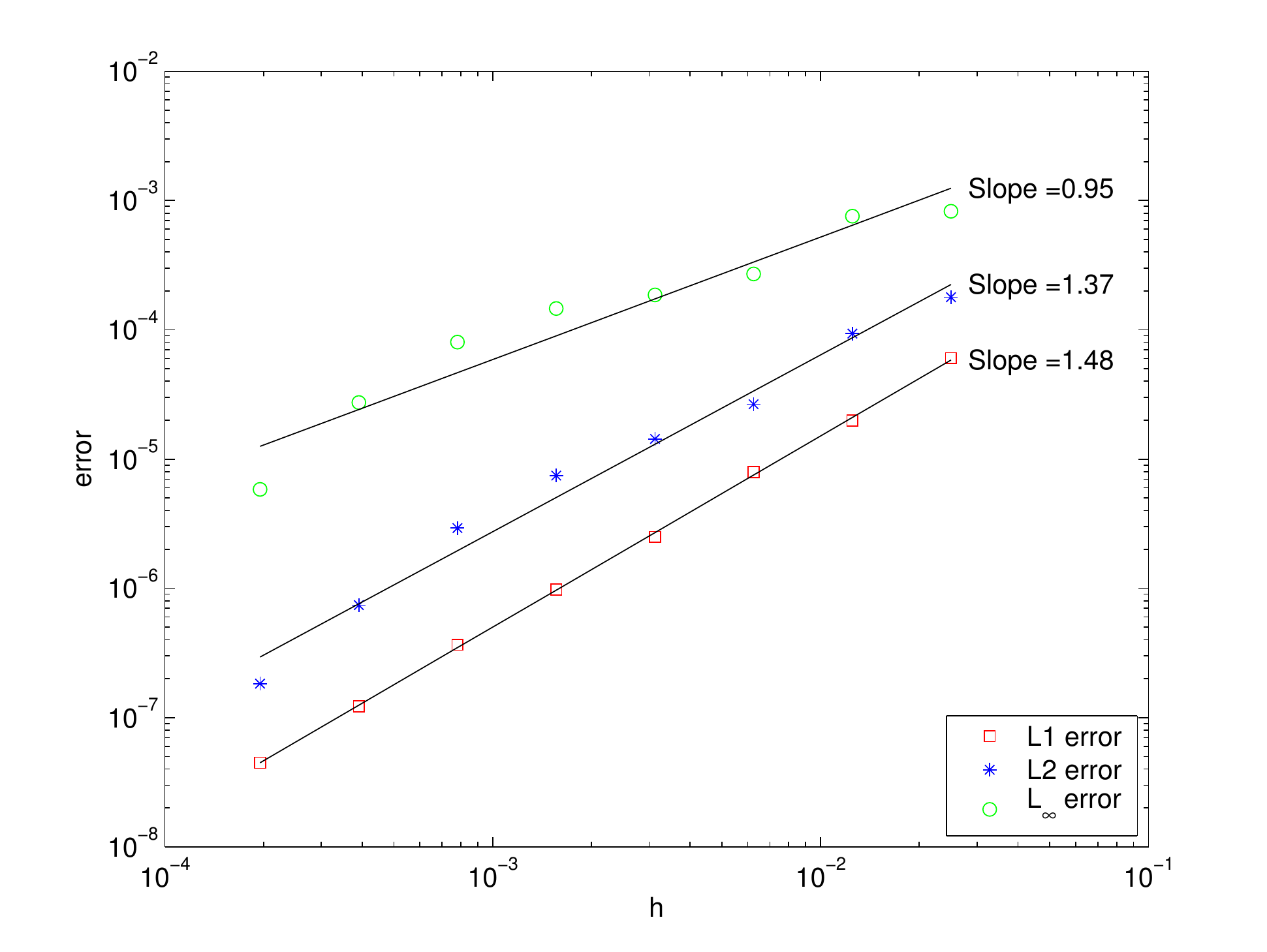}
\caption{\label{fig:1d-error2} Example~1, SLDG scheme, with $P^2$ elements. Left: $\dt=h/2$, Right: $\dt=C\,h^{3/5}$. }
\end{center}
\end{figure}

\paragraph{\bf Example 2 (2--d, RKDG)}
This is a two-dimensional test, same as \cite[Example 3]{bok-che-shu-14}).
The equation solved is
\be  & &  \min(u_t + \frac{1}{2} u_{x} +\frac{1}{2} u_{y}, u - g(x,y)) =0,\quad  t>0,\ (x,y)\in \mO,\\
     & &  u(0,x,y) = u_0(x,y), \quad (x,y)\in \mO,
\ee
where $g(x,y):= \sin(\pi (x + y))$, $u_0(x,y)= 0.5 + g(x,y)$, and
$\mO=[-1,1]^2$ with periodic boundary conditions.
The exact solution is known and is obtained as in Example~1:
$$ u(t,x,y)= u^{(1)}(t,x+y). $$

We now consider the two-dimensional version of the RKDG scheme~\eqref{eq:DGscheme2}, using $Q^2$ elements (tensor product $P^2\otimes P^2$).

Accuracy results are shown in Table~\ref{tab:2d_lin1} and Figure \ref{fig:2d-error} for time $t=0.5$.
The errors are computed globally (a uniform grid mesh of $50^2$ points is used in each mesh cell to estimate the errors).

The results in this example demonstrate that our scheme is also
convergent in two-dimensions.  We will comment upon the
two-dimensional case in the next section.



\begin{table}[!hbtp]
\caption{\label{tab:2d_lin1} Example 2, $Q^2$ elements.}
\begin{tabular}{cc|cc|cc|cc}
$N_x=N_y$ & $h_x = h_y$ & $L^1$-error & order & $L^2$-error &order & $L^\infty$-error& order\\[.1cm] \hline\hline
10  & 2.00e-1 & 2.25E-02 &   -    & 2.27E-02 &   -    & 1.68E-01 &   -    \\ 
20  & 1.00e-1 & 6.70E-03 &  1.75  & 9.85E-03 &  1.20  & 1.24E-01 &  0.44  \\ 
40  & 5.00e-2 & 1.70E-03 &  1.98  & 3.29E-03 &  1.58  & 4.35E-02 &  1.51  \\ 
80  & 2.50e-2 & 5.11E-04 &  1.73  & 1.31E-03 &  1.33  & 2.67E-02 &  0.70  \\ 
160 & 1.25e-2 & 1.46E-04 &  1.80  & 5.19E-04 &  1.33  & 9.28E-03 &  1.52  \\ 
\end{tabular}
\end{table}

\begin{figure}[!hbtp]
\begin{center}
\includegraphics[width=0.8\textwidth]{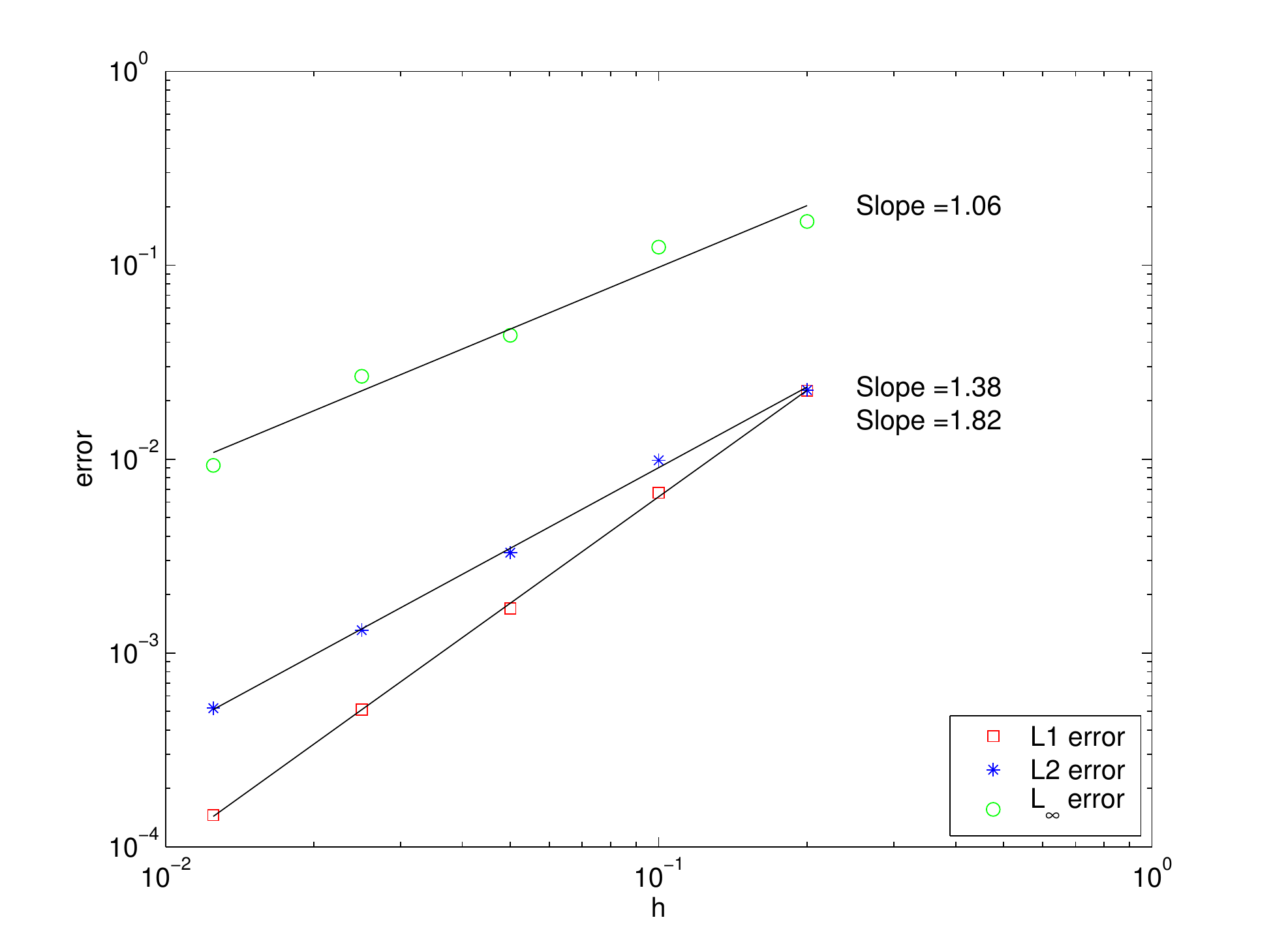}
\caption{\label{fig:2d-error} Example 2, $Q^2$ elements. }
\end{center}
\end{figure}

The numerical solution and the exact solution are also plotted
in Figure~\ref{fig:2d}, showing good agreements.

\begin{figure}[!hbtp]
\begin{center}
\includegraphics[width=0.45\textwidth]{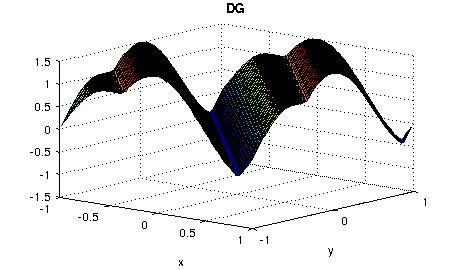}
\includegraphics[width=0.45\textwidth]{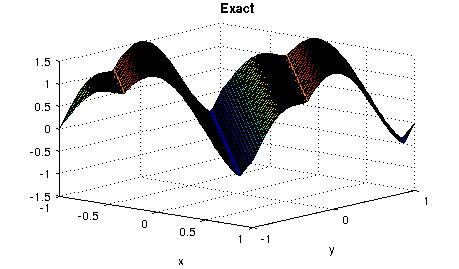}
\caption{\label{fig:2d} Example~2 at time~$t=0.5$, numerical (left) and exact (right) data, using $Q^2$ elements with $N_x=20$ mesh cells}
\end{center}
\end{figure}

\section{Concluding remarks}
\label{sec:conclusion}

In this paper, we prove convergence of the SLDG and RKDG methods for 
the obstacle problem under the ``no shattering" assumption of the 
exact solution. We utilize the DPP of the obstacle solutions. 
The proof of the SLDG methods relies on the property of the 
$L^2$ projection, while new techniques of devising piecewise 
intermediate stage functions are developed for the convergence of 
the RKDG methods. 

We remark that the proposed methods can be easily extended to 
treat multi-dimensional obstacle equations.
The simplest way is to define a multi-dimensional DG basis 
obtained as a tensor product of the one-dimensional DG basis.
The Gaussian points on each cell can then be defined accordingly. 
The operation of taking the maximum at the Gaussian points 
is therefore straightforward as in the one-dimensional case. 
The definition of the RKDG scheme in the multi-dimensional
case is well known and is an extension of the one-dimensional case.
On the other hand, 
the definition of the SLDG scheme in multi-dimensions
is not straightforward. However, high-order stable splittings 
methods for SLDG for the linear advection equation can be 
devised (see for example Bokanowski and Simarmata 
\cite{Bokanowski_Simarmata}). Finally the ``no-shattering" property
can be easily extended to multi-dimensions by demanding that the 
exact solution be piecewise regular except on a finite union of 
compact submanifolds,
and error estimates of the same order as in the one-dimensional
case will then hold.
We refer to Example 2 in the previous section for the numerical
performance of our RKDG methods in two-dimensions.

\bibliographystyle{abbrv}
\bibliography{bib_cheng,bib_comp}

\end{document}